\numberwithin{equation}{section}
\newtheorem{thm}{Theorem}[section]
\newtheorem{prop}[thm]{Proposition}
\newtheorem{lemm}[thm]{Lemma}
\newtheorem{cor}[thm]{Corollary}
\newtheorem{rema}[thm]{Remark}
\newtheorem{defn}[thm]{Definition}
\newcommand{\bq}{\begin{equation}}
\newcommand{\eq}{\end{equation}}
\title{Remarks on functions with bounded Laplacian}
\author{Tarek M. Elgindi}
\address{Department of Mathematics, Princeton University, Princeton, NJ 08544}
\email{tme2@.princeton.edu}
\date{\today}
\begin{document}
\maketitle
\begin{abstract}
$\Delta \psi:=\frac{\partial^2 \psi}{\partial x_1^2}+\frac{\partial^2 \psi}{\partial x_2^2}$ being locally bounded does not imply that $D^2\psi$ is locally bounded. However, we prove that if $\psi$ is invariant under rotation by $\frac{2\pi}{m}$, for some $m\geq 3$, and $\Delta \psi$ is locally bounded, then $$\sup_{x\in B_1(0)}\frac{|\nabla \psi(x)|}{|x|}<\infty.$$ 
This is sharp in that there are examples of functions $\psi$ for which $\Delta \psi$ is locally bounded, which are invariant under rotation by $\pi$ with $|\psi(x)-\psi(0)|\approx |x|^2 |\log|x||$ as $|x|\rightarrow 0$. This bound and its generalizations could be of use in different contexts, particularly for questions about singularity formation in evolution equations. We came upon it while studying certain singular solutions of the incompressible Euler equations in two dimensions (see \cite{E}).  One other application is to prove boundedness of $D^2 \psi$ when $\Delta \psi$ is the characteristic function of a set with self-intersection points (see Section 5). In fact, if $\Delta \psi=\chi_{A}$ and $A$ is the union of sectors emanating from a single point, one can give necessary and sufficient conditions on $A$ for $D^2 \psi$ to be locally bounded (see Section 6). 

\end{abstract}

\tableofcontents

\section{Introduction}

The problem of inverting the Laplacian is one of the fundamental classical problems in mathematical analysis. The basic existence question is: when can we recover a function of $n$ variables, $\psi(x_1,...,x_n),$ from its Laplacian, $\Delta \psi:=\sum_{i=1}^n \frac{\partial^2\psi}{\partial{x_i}^2}$? The related regularity question is: given that $\Delta \psi$ belongs to a certain function space, is it true that $D^2 \psi$ also belongs to that same function space?  There are numerous situations when both the existence and regularity question can be answered affirmatively. For the regularity question, when the function space is $L^p$ for $1<p<\infty$ or $C^\alpha$, $0<\alpha<1$, the answer is generally yes. One notable "borderline" case which is troublesome for the regularity question is when the function space is $L^\infty$.  

More formally, given $f\in L^\infty$ with compact support, consider the problem of finding $\psi$ such that
$$\Delta\psi = f,\,\,\, \text{in} \,\,\, \mathbb{R}^2.$$
Classical results give us that any $\psi$ solving this equation must belong to $W^{2,p}_{loc}$ for every $p<\infty$ (see for example the book of Gilbarg and Trudinger \cite{GilbargTrudinger}). In fact, $D^2\psi\in BMO$, the space of functions of bounded mean-oscillation. However, $D^2\psi$ need not belong to $L^\infty_{loc}$. In fact, $D^2\psi$ may be unbounded near jump discontinuities of $f$, though only logarithmically so. This problem is very much related to the unboundedness of singular integrals on $L^\infty.$ Indeed, using the Newtonian potential, we can write $$\psi(x)= \frac{1}{2\pi} \int \log |x-y| f(y) dy.$$
One can then deduce properties of $D^2 \psi$ from differentiating the integral in the proper way. It can then be seen that $D^2 \psi$ is a matrix of singular integrals applied to $f:$
$$-D^2 \psi=\begin{bmatrix}
R_1^2 f & R_1R_2f \\
       R_1R_2 f    & R_2^2 f  \
    \end{bmatrix}$$
where $R_1$ and $R_2$ are the standard Riesz transforms on $\mathbb{R}^2:$ $$R_i(f):= \text{P.V.}\frac{1}{2\pi}\int_{\mathbb{R}^2}\frac{x_i-y_i}{|x-y|^3} f(y)dy,$$ where the integral is understood in the sense of the principal value. The problem of getting $L^\infty$ bounds on $D^2 \psi$ from $L^\infty$ bounds on $\Delta \psi$ then is equivalent to studying the boundedness of the Riesz transforms $R_1^2, R_2^2,$ and $R_1R_2$ on $L^\infty$. 
\subsection{Borderline Inequalities}
While it would seem that one should just avoid $L^\infty$ when inverting the Laplacian, many problems in the study of non-linear partial differential equations force us to work with $L^\infty$. This tension has begotten many interesting so-called borderline inequalities where authors try to quantify losses incurred by working with $L^\infty.$ A good example of this are the stream of Brezis-Gallouet-Wainger, Moser-Trudinger, and Bourgain-Brezis inequalities (see \cite{BCD}). Though many of those inequalities deal with the lack of critical Sobolev embeddings into $L^\infty$, some of them also deal with the problem of deriving estimates on $D^2 \psi$ from estimates on $\Delta \psi$. For example, Brezis and Gallouet proved that given that $\psi\in C^{2,\alpha}(\overline\Omega)$ and $\psi=0$ on $\partial\Omega$ for $\Omega$ a smooth and bounded domain, 

$$|D^2 \psi|_{L^\infty}\leq C |\Delta \psi|_{L^\infty} \log (\frac{|\psi|_{C^{2,\alpha}}}{|\Delta \psi|_{L^\infty}}+10)$$ for some universal constant $C>0.$ 

The use of this inequality is to be able to assert that while $D^2 \psi$ cannot be bounded point-wise by the maximum of $|\Delta \psi|$ alone, one could do this with a logarithmic loss. Such inequalities play a crucial role in global well-posedness proofs in various contexts in the study of evolution equations.

\subsection{Jump Discontinuities Across Smooth Curves}

In certain situations, $D^2 \psi$ may belong to $L^\infty$ even if $\Delta\psi$ has jump discontinuities. This, however, depends upon the geometry of the jump discontinuities of $\Delta\psi.$ For example, if $\Delta\psi$ is the characteristic function of a set with smooth boundary, it can be shown that $D^2\psi\in L^\infty.$ One does this by observing that if $\Delta\psi$ is characteristic function of a set $A$ with smooth boundary, then $\psi$ is smooth in the direction tangential to the boundary. Looking at the case when $A=\mathbb{R}^2_{+}$, the upper half plane, is quite revealing. In fact, an improvement on the Brezis-Gallouet inequality can actually be proven in this context \cite{Chemin}.\footnote{It must be mentioned that the Brezis-Gallouet inequality can actually be used to control $D^2 \psi$ in the Besov space $B^{0}_{\infty,1}$ which controls any number of Riesz transforms of $D^2 \psi$ in $L^\infty$, whereas anisotropic regularity only allows one to control Riesz transforms of the form $R_1^jR_2^i$ with $i+j\in 2\mathbb{Z}$.} This can be done by direct analysis of the Green's function for the Laplacian as is done in \cite{Chemin},\cite{BertozziConstantin}. This cannot be done, however, when $\Delta \psi$ is the characteristic function of a set which has a corner because there are numerous examples where $\Delta \psi$ is the characteristic function of a set which is smooth away from a single corner and for which $D^2 \psi$ is unbounded near the corner.

\subsection{The Main Result}

Up to now, we have seen two ways to propagate $L^\infty$ bounds on $D^2 \psi$ from bounds on $\Delta \psi$. Both cases require that $\Delta \psi$ enjoy some extra regularity (be it isotropic or anisotropic). There still remains the question: is there a way to control $|D^2 \psi|$  \emph{just} by $|\Delta \psi|_{L^\infty}?$ In this short note, we give one case where this can, effectively, be done.  In fact, we prove that if $\Delta \psi$ is invariant under rotation by $\frac{2\pi}{m}$ for some $m\geq 3$, then, $$\frac{|\nabla \psi(x)|}{|x|} \leq C$$ as $x\rightarrow 0$ \emph{without} any further regularity assumptions on $\psi$.\footnote{Clearly, this can be relaxed to the case where $\Delta \psi$ is invariant under rotation by $\frac{2\pi}{m}$ only in a neighborhood of $0$.}  The following two examples of piece-wise constant functions $f$ illustrate the importance of the symmetry assumption:

\vspace{5mm}
\begin{figure}[h]
{\includegraphics[width=4cm, height=4cm]{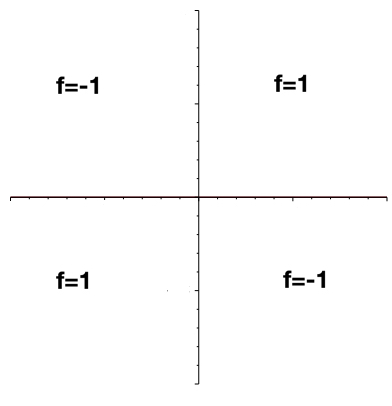}\,\,\,\,\,\,\,\,\,\,\,\,\,\,\,\,\,\,\,\,\,\,\,\,\,\,\,\,\,\,\,\,\,\,\,\,\,\,\,\,\,\,\,\,\,\,\,\,\,\,\,\,\,\,\,\,\,\,\,\,\,\,\,\,\,\,\,\,\,\,\,\,\,\,\,\,\,\,\,\,\,\,\,\,\,\,\,\,\,\,\,\, \includegraphics[width=4cm, height=4cm]{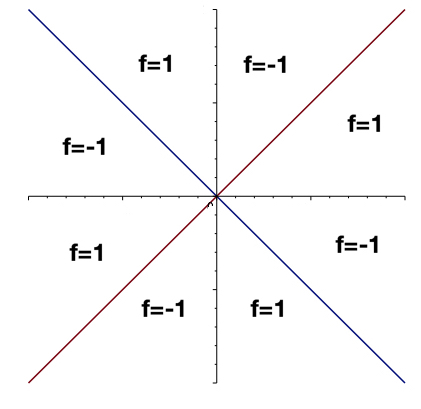}}
\caption{If $\Delta \psi=f$ with $f$ the piecewise constant function on the right, then $D^2 \psi\in L^\infty$. However, if $\Delta \psi=f$ with $f$ the piecewise constant function on the left, then $|\nabla \psi|\approx |x||\log |x||$ near $|x|=0$.}
\end{figure}
\vspace{5mm}

These two examples illustrate the following simple fact: corner-like discontinuities in $\Delta \psi$ may lead to $D^2 \psi$ being unbounded, but only in certain geometric configurations. In other situations, where more symmetries are present near the corner, $D^2\psi$ will remain bounded. 

\subsection{The 2d Incompressible Euler Equations}

We now discuss one particular example of a non-linear evolution equation where information about inverting the Laplacian on $L^\infty$ is very important. Recall the 2d incompressible Euler equations in a smooth bounded domain $\Omega$:
\begin{equation}
\label{2dEuler} \partial_t\omega+u\cdot\nabla\omega=0,
\end{equation}
\begin{equation}
\label{BSL}u=\nabla^\perp\psi,
\end{equation}
where $\psi$ is the unique solution of:
$$\Delta \psi=\omega\,\,\, \text{in}\,\,\, \Omega,$$
$$\psi=0\,\,\, \text{on}\,\,\, \partial\Omega.$$
It is clear that smooth enough solutions of \eqref{2dEuler}-\eqref{BSL} satisfy: $$\frac{d}{dt} |\omega|_{L^p}(t)=0,$$ for all $t\in\mathbb{R}$ and all $1\leq p\leq\infty$. In particular, smooth solutions satisfy an \emph{a-priori} $L^\infty$ estimate. The Yudovich theory (\cite{Yudovich63}) allows us to construct unique global solutions to \eqref{2dEuler}-\eqref{BSL} for given $\omega_0\in L^\infty$. This is due to the fact that an $L^\infty$ estimate on $\omega$ automatically implies a quasi-Lipschitz estimate on $u$ which, in turn, allows us to control the Lagrangian flow-map associated to $u.$ This is all discussed in detail in \cite{Yudovich63} so we do not repeat it here. The $L^\infty$ estimate on $\omega$ also allows us to construct a unique global smooth solution to \eqref{2dEuler}-\eqref{BSL} from smooth initial data. However, due to the fact that the $L^\infty$ estimate on $\omega$ only implies a quasi-Lipschitz bound on $u$ (or using the borderline inequality of Subsection 1.1), one can prove that smooth solutions of \eqref{2dEuler}-\eqref{BSL} satisfy a double exponential type bound:

$$|\nabla\omega(t)|_{L^\infty} \leq |\omega_0|_{L^\infty} \Big(\frac{|\nabla\omega_0|_{L^\infty}}{|\omega_0|_{L^\infty}}\Big)^{\exp(C|\omega_0|_{L^\infty}t)},$$ for some universal constant $C>0.$

The reason that this double exponential growth might be possible is precisely because a bound on $\omega$ in $L^\infty$ does not imply a $W^{2,\infty}$ bound on $\Delta^{-1}\omega$. Hence having a good understanding on when $L^\infty$ bounds on $\omega$ imply a $W^{2,\infty}$ bound on $\Delta^{-1}\omega$ is very important to understand the mechanisms which lead to double exponential growth of $\nabla\omega$.  Improving on this double-exponential estimate in general seems to be a great challenge. In fact, when the 2d Euler equations are studied on smooth bounded domains which are symmetric with respect to some line in $\mathbb{R}^2$, the double exponential bound is known to be optimal in full generality \cite{KS},\cite{Xu}, though the growth is only known to be possible on the boundary. In fact, the constructions in \cite{KS} and \cite{Xu} rely on proving that for certain initial data (satisfying a suitable symmetry condition) $t\rightarrow\infty$, $$\sup_{|x|\leq c} \frac{|\nabla \psi(x)|}{|x|}\geq e^{Ct},$$ which is precluded under our symmetry condition. It is possible that the idea of this paper could be used to preclude double-exponential growth of $\nabla \omega$ away from the boundary. This seems somewhat related to the work of Itoh, Miura, and Yoneda \cite{Yoneda}.

\subsection{Organization of the Paper}

In the second section of this note, we discuss some important examples which will help to build intuition about the sorts of issues one faces when inverting the Laplacian on $L^\infty.$ In the third section we state and prove the main lemma. In the fourth section we discuss various direct extensions. In Sections 5 and 6 we study the problem of solving $$\Delta \psi= \chi_{A}$$ when $A$ is a bounded Lipschitz domain.

\section{Counterexamples}

In this section we discuss several types of examples which illustrate the difficulty of inverting the Laplacian on $L^\infty$.

\subsection{Counterexample 1: Using harmonic polynomials}

Let $P$ be a harmonic polynomial on $\mathbb{R}^2$ which is homogeneous of degree 2. In fact, there are only two possibilities for $P$ (up to scaling): $P(x_1,x_2)=x_1x_2$ or $P(x_1,x_2)=x_1^2-x_2^2$. Note that neither of these are invariant under rotation by $\frac{\pi}{2}$ (they are, in fact, "odd"). Let $\phi$ be a smooth function which is identically 1 in $B_1(0)$ and identically $0$ outside of $B_2(0)$. 
Now define $$\psi_{P}= P(x_1,x_2) \log |x_1^2+x_2^2| \phi(x_1,x_2). $$
Observe that $\psi_P$ is smooth away from $(0,0)$ and that on $B_1(0)$, $\Delta \psi_P=\nabla P \cdot \frac{4x}{|x|^2}.$ Since $P$ is homogenous of degree 2, $\Delta \psi_p\in L^\infty$. On the other hand, clearly $D^2 \psi_P\not\in L^\infty$ and $$|\nabla \psi_P|\approx -|x| \log |x|,$$ near $|x|=0$. 

We note here that this example can be found in the book of Han and Lin \cite{LinHan}. 

\subsection{Counterexample 2: Using Fourier series}

Define $$\psi(x,y)=\sum_{n,m=1}^{\infty} \frac{\sin(nx)\sin(my)}{nm (n^2+m^2)}.$$
Notice that $$\Delta \psi = \sum_{m,n=1}^\infty \frac{\sin(nx)\sin(my)}{nm}= sgn(x)sgn(y)$$ on $[-\pi,\pi]^2$. Hence, $\Delta \psi\in L^\infty$. However, $$\partial_{xy} \psi = \sum_{n,m=1}^\infty\frac{\cos(nx)\cos(my)}{n^2+m^2}.$$ It can be easily checked that $\partial_{xy}\psi$ is like $|\log|x||$ near $|x|=0$. 
Notice that $\psi$ is not invariant under rotation by $\frac{2\pi}{m}$ for any $m\geq 3$.

\subsection{Counterexample 3: The characteristic function of a set with a corner}

Consider, for example, $$\Delta \psi=\chi_{[0,1]^2}.$$ We will show that $$|\nabla \psi-\nabla \psi(0)|\approx |x| |\log|x||.$$
Using the Green's function representation for $\psi$, we get:
$$\partial_{x_1} \psi(x)= \int_{\mathbb{R}^2} \frac{x_1-y_1}{|x-y|^2} \Delta \psi(y) dy= \int_{[0,1]^2 }\frac{x_1-y_1}{|x-y|^2}dy$$
$$=\int_{0}^{1} \int_{0}^1 \frac{x_1-y_1}{(x_1-y_1)^2+(x_2-y_1)^2}  dy=\int_0^1 \frac{1}{2}\log( \frac{(x_2-y_2)^2+(x_1)^2}{(x_2-y_2)^2+(1-x_1)^2})dy_2. $$
Hence, $\partial_{x_1} u(0,0)=\int_0^1\log \frac{y_2^2}{y_2^2+1} dy_2.$ Moreover, $\partial_{x_1} u(0,x_2)=\int_0^1\log\frac{(x_2-y_2)^2}{(x_2-y_2)^2+1}dy_2.$
Therefore, $$\partial_{x_1} u(0,x_2)-\partial_{x_1}u(0,0)=\int_0^1\log \frac{(x_2-y_2)^2}{y_2^2} +\int_0^1\log \frac{y_2^2+1}{(x_2-y_2)^2+1}dy_2$$
$$=\int_0^1\log \frac{(x_2-y_2)^2}{y_2^2}dy_2 + O(x_2).$$
Now, $$\int_0^1 \log\frac{(x_2-y_2)^2}{y_2^2}dy_2= \int_{-x_1}^{1-x_2} \log{y_2^2}dy_2-\int_{0}^{1} \log{y_2^2}dy_2= \int_0^{x_2} \log y_2^2 dy_2-\int_{1-x_2}^{1} \log y_2^2dy_2$$ $$= x_2 \log|x_2^2| +O(x_2). $$
In fact, this could have been done for the case where $\Delta \psi=\chi_{[0,1]^2\cup[0,-1]^2},$ which gives the unboundedness of $D^2 \psi$ in the left figure of Figure 1.

We note here that this example can be found in the paper of Bahouri and Chemin \cite{BC94}.











\subsection{The One-dimensional Case}

We also mention here a short estimate in one dimension which illustrates the use of symmetry conditions.
\begin{lemm}
Let $\phi:\mathbb{R}\rightarrow\mathbb{R}$ be such that $\phi'\in BMO([-1,1]).$ Assume that $\phi$ is even. Then, $$\frac{|\phi(x)-\phi(0)|}{|x|}\leq |\phi'|_{BMO([-1,1])}.$$ 
\end{lemm}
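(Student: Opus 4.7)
The plan is to exploit the evenness of $\phi$ to place us in the setting where $\phi'$ is odd, and then use the fact that odd functions on symmetric intervals automatically have zero mean, so that the BMO seminorm directly controls the $L^1$ average.

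First I would observe that $\phi$ being even (and $\phi' \in BMO \subset L^1_{loc}$) implies $\phi$ is absolutely continuous on $[-1,1]$ with $\phi'(-t) = -\phi'(t)$ for a.e.\ $t$. Consequently, for any $x \in [-1,1]$, the interval $I_x := [-|x|, |x|]$ is symmetric about the origin, and
$$
(\phi')_{I_x} := \frac{1}{2|x|}\int_{-|x|}^{|x|}\phi'(t)\,dt = 0.
$$
This is the key structural observation: the symmetry forces the BMO mean to vanish on symmetric intervals.

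Next I would apply the BMO definition directly to this interval to get
$$
\frac{1}{2|x|}\int_{-|x|}^{|x|} |\phi'(t)|\,dt \;=\; \frac{1}{2|x|}\int_{-|x|}^{|x|}\bigl|\phi'(t)-(\phi')_{I_x}\bigr|\,dt \;\leq\; |\phi'|_{BMO([-1,1])}.
$$
Since $|\phi'|$ is even, the integral over $[0,|x|]$ is exactly half of the integral over $I_x$, so
$$
\int_0^{|x|} |\phi'(t)|\,dt \;=\; \tfrac{1}{2}\int_{-|x|}^{|x|}|\phi'(t)|\,dt \;\leq\; |x|\cdot |\phi'|_{BMO([-1,1])}.
$$

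Finally, the fundamental theorem of calculus gives $|\phi(x)-\phi(0)| \leq \int_0^{|x|}|\phi'(t)|\,dt$, and dividing by $|x|$ yields the claimed bound. There is no real obstacle here; the only minor subtlety to state cleanly is the justification of the FTC for an absolutely continuous $\phi$ with $\phi' \in BMO$, and the a.e.\ oddness of the weak derivative of an even function, both of which are standard. The entire argument is a two-line consequence of the symmetry forcing the BMO mean to be zero on $I_x$.
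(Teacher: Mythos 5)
Your argument is correct and is exactly the route the paper takes (the paper's proof is the one-line remark that it follows from the definition of BMO and the oddness of $\phi'$): evenness of $\phi$ makes $\phi'$ odd, so its mean over the symmetric interval $[-|x|,|x|]$ vanishes, the BMO seminorm then bounds the $L^1$ average of $\phi'$ there, and the fundamental theorem of calculus finishes, with the constant coming out exactly as stated. Your write-up simply supplies the details the paper leaves implicit.
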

\begin{lemm}
Let $\Phi\in BMO([-1,1])\cap C^{1}([-1,1]-\{0\})$ be an odd function of one variable. Assume that $$|\Phi'(x)|\leq \frac{C}{|x|}$$ for $|x|>0$.  
Then, $\Phi\in L^\infty([-1,1])$ and $$|\Phi|_{L^\infty}\leq 2|\Phi|_{BMO}+ |x \Phi'(x)|_{L^\infty}$$
\end{lemm}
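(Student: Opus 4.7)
The plan is, for a fixed $x_0 \in (0,1]$, to produce a point $t_* \in [x_0/2, x_0]$ where $|\Phi(t_*)|$ is controlled by the BMO seminorm, and then to transfer this pointwise control to $x_0$ via the derivative estimate. The case $x_0 < 0$ then follows immediately from oddness.

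The first step exploits the symmetry. Since $\Phi$ is odd, its mean on the symmetric interval $[-x_0, x_0] \subset [-1,1]$ vanishes, so the BMO assumption combined with the evenness of $|\Phi|$ gives
$$\frac{1}{x_0}\int_0^{x_0} |\Phi(t)|\, dt \;=\; \frac{1}{2x_0}\int_{-x_0}^{x_0}|\Phi(t)|\,dt \;\leq\; |\Phi|_{BMO}.$$
Discarding the contribution on $[0, x_0/2]$ and rescaling by two shows that the average of $|\Phi|$ on $[x_0/2, x_0]$ is at most $2|\Phi|_{BMO}$; by continuity of $\Phi$ on $(0,1]$, there exists $t_* \in [x_0/2, x_0]$ with $|\Phi(t_*)| \leq 2|\Phi|_{BMO}$.

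The second step uses the derivative bound. Writing $A := |x\Phi'(x)|_{L^\infty}$, we have $|\Phi'(s)| \leq A/s$ on $(0,1]$, so integrating from $t_*$ to $x_0$,
$$|\Phi(x_0) - \Phi(t_*)| \;\leq\; A\int_{t_*}^{x_0} \frac{ds}{s} \;=\; A\log(x_0/t_*) \;\leq\; A\log 2 \;\leq\; A.$$
The triangle inequality then yields $|\Phi(x_0)| \leq 2|\Phi|_{BMO} + A$, which is the stated bound.

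There is no real obstacle here; the only small subtlety is the choice of scale $[x_0/2, x_0]$, made precisely so that the logarithmic factor $\log(x_0/t_*)$ incurred in the derivative step stays bounded by $\log 2 < 1$. Any wider dyadic window would force a constant strictly larger than $1$ in front of $A$, and any attempt to use BMO on the smaller one-sided interval $[0,x_0]$ (without invoking oddness) would lose the vanishing-mean property that makes the estimate work.
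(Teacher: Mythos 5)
Your proof is correct and follows essentially the same route as the paper: oddness kills the mean on the symmetric interval so BMO controls the average of $|\Phi|$ on a dyadic window near $0$, and the bound $|\Phi'(x)|\leq C/|x|$ transfers that control to the desired point at the cost of at most $\log 2 \cdot |x\Phi'(x)|_{L^\infty}$. The only cosmetic difference is that the paper anchors the window $[\epsilon,2\epsilon]$ to the right of the target point $\epsilon$, whereas you pick a good point $t_*\in[x_0/2,x_0]$ to the left of $x_0$ and integrate the derivative up to $x_0$; both yield the stated constant $2|\Phi|_{BMO}+|x\Phi'(x)|_{L^\infty}$.
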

\begin{rema}
Clearly the oddness assumption is essential for both lemmas since $\log|x|\in BMO([-1,1])$ and $\frac{d}{dx} \log|x|=\frac{1}{x}.$ These results should be compared with the higher dimensional case in Lemma 4.4.
\end{rema}
Before giving the proof, we mention a corollary of these lemmas which is a "local" Fefferman-Stein decomposition. 
\begin{cor}
Let $\Phi\in BMO([-1,1])\cap C^{1}([-1,1]-\{0\})$. Assume that $$|\Phi'(x)|\leq \frac{C}{|x|}.$$
Then, there exist $\Phi_1,\Phi_2\in L^\infty([-1,1])$ such that $$\Phi= \Phi_1 + H(\Phi_2),$$ where $H$ is the Hilbert transform. 
\end{cor}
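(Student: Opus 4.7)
The plan is to decompose $\Phi = \Phi_o + \Phi_e$ into its odd and even parts, and to handle each piece using Lemma~2.5 together with the standard facts that the Hilbert transform $H$ preserves $BMO$ and swaps the parity of a function.

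The odd part $\Phi_o(x) = \tfrac12(\Phi(x) - \Phi(-x))$ inherits every hypothesis of Lemma~2.5: it is odd by construction, lies in $BMO([-1,1])\cap C^{1}([-1,1]\setminus\{0\})$, and its derivative satisfies $|\Phi_o'(x)| = \tfrac12|\Phi'(x) + \Phi'(-x)| \le C/|x|$. The lemma then gives $\Phi_o \in L^\infty([-1,1])$, which I absorb into $\Phi_1$.

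For the even part $\Phi_e$, the plan is to set $\Phi_2 := -H(\widetilde\Phi_e)$, where $\widetilde\Phi_e$ denotes the zero extension of $\Phi_e$ to $\mathbb{R}$. Since $H^2 = -I$, this formally yields $H(\Phi_2) = \widetilde\Phi_e$, so that $\Phi_e - H(\Phi_2)$ vanishes on $[-1,1]$ up to a locally bounded error coming from the cutoff at $\pm 1$ (which I absorb into $\Phi_1$). Because $H$ swaps parities and preserves $BMO$, the function $\Phi_2$ is odd and in $BMO$; it is also $C^1$ away from $\{0,\pm 1\}$ by inspection of the Hilbert-transform kernel. To conclude $\Phi_2 \in L^\infty$ via Lemma~2.5, it remains to verify the pointwise bound $|\Phi_2'(x)| \le C/|x|$ in a neighborhood of the origin.

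This derivative estimate is the main obstacle. Writing $\Phi_2'(x) = -H(\widetilde\Phi_e')(x)$ and using the oddness of $\widetilde\Phi_e'$, one obtains
\[
H(\widetilde\Phi_e')(x) \;=\; \frac{2}{\pi}\,\mathrm{p.v.}\!\int_0^\infty \frac{y\,\widetilde\Phi_e'(y)}{x^2 - y^2}\,dy,
\]
where the key feature is that $G(y):= y\,\widetilde\Phi_e'(y)$ is \emph{bounded} by hypothesis, so that the singularity of $\widetilde\Phi_e'$ at the origin has been tamed. Splitting the domain of integration into $(0,|x|/2)$, $(|x|/2,2|x|)$, and $(2|x|,\infty)$, the outer two pieces are controlled by the pointwise size of the kernel (each contributing $O(1/|x|)$), while the middle annulus is handled by the principal-value cancellation of the symmetrised kernel $\tfrac{1}{2x}[1/(x-y)+1/(x+y)]$. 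Together these three regions give $|H(\widetilde\Phi_e')(x)| \le C/|x|$, and Lemma~2.5 then yields $\Phi_2 \in L^\infty([-1,1])$, completing the decomposition $\Phi = \Phi_1 + H(\Phi_2)$.
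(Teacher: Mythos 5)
Your framework is the same as the paper's one-line argument (split $\Phi$ into odd and even parts, handle the odd part with the odd-function lemma, and write the even part as $H(\Phi_2)$ with $\Phi_2=-H(\widetilde\Phi_e)$, using $H^2=-I$, parity reversal, and BMO-boundedness of $H$), and your treatment of the odd part is fine. The gap is precisely at the step you call the main obstacle. In the middle region $|x|/2<y<2|x|$ you appeal to ``principal-value cancellation'' of the kernel $\frac{1}{2x}\bigl[\frac{1}{x-y}+\frac{1}{x+y}\bigr]$, but the oddness of $\frac{1}{x-y}$ about $y=x$ buys nothing unless $G(y)=y\,\widetilde\Phi_e'(y)$ has a modulus of continuity at $y=x$: for a merely bounded (even continuous) $G$, $\mathrm{p.v.}\int_{|x|/2}^{2|x|}\frac{G(y)}{x-y}\,dy$ is a truncated Hilbert transform of an $L^\infty$ function and is not uniformly bounded in $x$. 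In fact the intermediate estimate $|\Phi_2'(x)|\le C/|x|$ is false under the hypotheses of the corollary. Take $x_n=8^{-n}$, let $G$ be supported on the disjoint annuli $(x_n/2,2x_n)$, equal there to a smoothed copy of $\mathrm{sgn}(x_n-y)$ with smoothing scale $x_n/n$ and with $\int G(t)\,dt/t=0$ on each annulus, and set $\Phi(y)=\Phi(-y)=-\int_{y}^{1}G(t)\,dt/t$ for $0<y\le 1$. Then $\Phi$ is even, bounded (hence in BMO), $C^1$ away from $0$, and $|\Phi'(y)|=|G(y)|/|y|\le 1/|y|$; yet $\mathrm{p.v.}\int_{x_n/2}^{2x_n}\frac{G(y)}{x_n-y}\,dy\approx 2\log n$, so $|\Phi_2'(x_n)|\gtrsim \frac{\log n}{x_n}$, and Lemma 2.2 cannot be applied to $\Phi_2$. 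So the route ``verify the hypotheses of the odd lemma for $\Phi_2$'' cannot be completed; note the conclusion of the corollary is not threatened (this $\Phi$ is bounded, so the trivial decomposition works), it is the method that breaks.

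The natural repair is to bound $\Phi_2$ itself rather than its derivative. Since $\widetilde\Phi_e$ is even, $H(\widetilde\Phi_e)(x)=\frac{2x}{\pi}\,\mathrm{p.v.}\int_0^\infty\frac{\widetilde\Phi_e(y)}{x^2-y^2}\,dy$; the explicit factor of $x$ is the even--odd cancellation, and the hypothesis $|\Phi_e'(t)|\le C/|t|$ gives the oscillation bound $|\Phi_e(y)-\Phi_e(x)|\le C\bigl(1+|\log(y/x)|\bigr)$. Writing $\Phi_e(y)=\Phi_e(x)+\bigl(\Phi_e(y)-\Phi_e(x)\bigr)$ and scaling $y=|x|s$ then yields $\sup_{0<|x|\le 1/2}|H(\widetilde\Phi_e)(x)|<\infty$ directly, with no appeal to Lemma 2.2 for $\Phi_2$. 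Two smaller points you gloss over: the distributional derivative of the zero extension carries point masses at $\pm1$ (harmless near $0$, but they should be recorded before writing $\Phi_2'=-H(\widetilde\Phi_e')$), and $H(\widetilde\Phi_e)$ generically has a logarithmic singularity at $\pm1$, so the ``cutoff error absorbed into $\Phi_1$'' is not automatically in $L^\infty$ up to the endpoints; extending $\Phi_e$ slightly beyond $[-1,1]$ before applying $H$ fixes this. Finally, $H$ does not preserve $C^1$, so the claim that $\Phi_2\in C^1$ away from $\{0,\pm1\}$ ``by inspection of the kernel'' would itself require a Dini-type condition on $\Phi_e'$; this is another reason to avoid routing the even part through Lemma 2.2.
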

Of course this corollary is implied by the Fefferman-Stein decomposition; however, the proof of this corollary via the preceeding lemmas is much more elementary and proceeds simply by decomposing $\Phi$ into its odd and even parts and then noticing that the Hilbert transform is an isometry and maps odd functions to even functions.

\begin{proof}[Proof of Lemma 2.2]

The first lemma is by definition of the BMO norm and using that $\phi'$ is odd. The second lemma can be seen as follows: Note that $$\frac{1}{\delta}\int_0^\delta|\Phi(t)|dt\leq |\Phi|_{BMO}$$ for all $\delta>0$ since $\Phi$ is odd and using the definition of BMO.  
Let $\epsilon>0$. 
We have that $$\frac{1}{2\epsilon}\int_{0}^{2\epsilon}|\Phi(x)|dx\leq |\Phi|_{BMO},$$ so $$\frac{1}{\epsilon}\int_\epsilon^{2\epsilon}|\Phi(x)|dx\leq 2|\Phi|_{BMO}.$$
But $$|\Phi(x)-\Phi(\epsilon)|\leq \frac{C}{\epsilon}|x-\epsilon|,$$ for all $x>\epsilon$
which implies that $$\frac{1}{\epsilon}\int_\epsilon^{2\epsilon} |\Phi(x)|\geq |\Phi(\epsilon)|-C.$$
In particular, $$|\Phi(\epsilon)|\leq 2|\Phi|_{BMO}+C.$$

\end{proof}

\section{The Lemma and its Proof}

In this section we give the statement of the main lemma and its proof. 

\begin{lemm}

Let $g\in L^\infty(\mathbb{R}^2)$. Assume that for all $x\in\mathbb{R}^2$, $$g(x)=g(x^\perp).$$ 

Let $\psi\in L^\infty_{loc}(\mathbb{R}^2)$ solve $$\Delta \psi= g \,\,\, \text{in} \,\,\,\mathbb{R}^2.$$

Then, $$\sup_{x\in B_1(0)}\frac{|\nabla \psi(x)|}{|x|}<\infty.$$ 
\end{lemm}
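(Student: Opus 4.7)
The approach is to leverage the fourfold rotational invariance of $g$ at the level of the integral representation of $\psi$. Identifying $\R^2\cong\C$ and writing $x,y\in\C$, the hypothesis $g(x) = g(x^\perp)$ reads $g(y) = g(iy)$, and by iteration $g(y) = g(i^k y)$ for $k=0,1,2,3$. After a preliminary symmetrization of $\psi$ under the group $\{I, R, R^2, R^3\}$ (legitimate because $\Delta$ commutes with rotations and $g$ is invariant) and a $\pi/2$-symmetric truncation of $g$ to ensure integrability (the remainder is a $\pi/2$-symmetric harmonic function near $0$, whose gradient is $O(|x|^3)$), one may assume that $\psi$ equals the Newtonian potential
\begin{equation*}
\psi(x) = \frac{1}{2\pi}\int_{\R^2}\log|x-y|\,g(y)\,dy.
\end{equation*}

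Substituting $y\mapsto i^k y$ in this integral for each $k$ and averaging, the $\pi/2$-invariance of $g$ delivers the symmetrized representation
\begin{equation*}
\psi(x) = \frac{1}{8\pi}\int_{\R^2}\log|x^4 - y^4|\,g(y)\,dy,
\end{equation*}
via the algebraic identity $\prod_{k=0}^{3}(x - i^k y) = x^4 - y^4$ in $\C$. This identity is the crucial observation: it replaces the standard logarithmic kernel with one that scales much more favorably as $x\to 0$.

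Differentiating under the integral sign, a short Wirtinger calculation gives $\partial_z\log|z^4-w^4| = 2z^3/(z^4-w^4)$, whence $|\nabla_x\log|x^4-y^4|| = 4|x|^3/|x^4-y^4|$ and
\begin{equation*}
|\nabla\psi(x)| \leq \frac{|g|_{L^\infty}}{2\pi}\int_{\R^2}\frac{|x|^3}{|x^4-y^4|}\,dy.
\end{equation*}
The decisive step is the complex-linear change of variables $y = xw$, under which $dy = |x|^2\,dw$ and $|x^4 - y^4| = |x|^4\,|1-w^4|$; the right-hand side then collapses to
\begin{equation*}
\frac{|x|\,|g|_{L^\infty}}{2\pi}\int_{\C}\frac{dw}{|1-w^4|}.
\end{equation*}
This last integral is finite: $|1-w^4| = \prod_{k=0}^{3}|w - i^k|$ vanishes only to first order at each fourth root of unity, giving local integrability on $\C$, and $|1-w^4|\sim|w|^4$ at infinity, giving integrability there because $4>2$. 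One concludes $|\nabla\psi(x)|\leq C|x|\,|g|_{L^\infty}$ on $B_1(0)$.

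The main obstacle is not the integral analysis itself but the bookkeeping of the preliminary reduction: one must ensure that the harmonic function absorbed during truncation, together with the difference between an arbitrary $L^\infty_{\mathrm{loc}}$ solution and its symmetrization, contribute at worst $O(|x|)$ to $|\nabla\psi(x)|$. The same scheme extends verbatim to rotations by $2\pi/m$ via $\prod_{k=0}^{m-1}(x-\omega^k y) = x^m - y^m$ with $\omega = e^{2\pi i/m}$, and the sharp threshold $m\geq 3$ is visible directly from the integrability of $1/|1-w^m|$ at infinity on $\R^2$, which holds precisely when $m>2$.
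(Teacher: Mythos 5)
Your argument is correct, and it reaches the paper's conclusion by a genuinely cleaner route within the same potential-theoretic strategy. The paper also symmetrizes the Newtonian kernel over the four rotations, but it does so at the level of the gradient kernel $\frac{x-y}{|x-y|^2}$: it puts the four terms over the common denominator $|x-y|^2|x+y|^2|x-y^\perp|^2|x+y^\perp|^2$, expands the numerator, checks by hand that the terms of first and fifth order in $x$ cancel, and then scales $y=|x|z$ to bound a universal integral. Your identity $\prod_{k=0}^{3}(x-i^k y)=x^4-y^4$ packages all of that cancellation at once: your symmetrized kernel $\nabla_x\log|x^4-y^4|$, of modulus $4|x|^3/|x^4-y^4|$, is exactly the complexified form of the paper's simplified rational kernel, and the complex change of variables $y=xw$ replaces the paper's case analysis near the four singularities by the single convergent integral $\int_{\mathbb{C}}|1-w^4|^{-1}\,dw$ (first-order zeros at the roots of unity, decay $|w|^{-4}$ at infinity). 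A further payoff is that your computation treats all $m$-fold symmetries uniformly via $\prod_{k=0}^{m-1}(x-\omega^k y)=x^m-y^m$, making the threshold $m\geq 3$ transparent through the integrability of $|1-w^m|^{-1}$ at infinity, whereas the paper handles general $m$ separately in Section 4 through the trigonometric identities of its Lemma 4.3 and leaves the details to the reader. One caveat, which you share with the paper rather than introduce: passing from an arbitrary $L^\infty_{loc}$ solution to the (symmetrized, compactly truncated) Newtonian potential discards a harmonic function whose gradient need not vanish at the origin, so what is really proved by both arguments is the bound for the canonical symmetric solution, equivalently the bound on $|\nabla\psi(x)-\nabla\psi(0)|/|x|$; your explicit flagging of this bookkeeping is, if anything, more careful than the paper's appeal to "standard considerations."
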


\begin{cor}
Assume that $g\in L^\infty(B_1(0))$. Assume that for all $x\in B_1(0)$, $g(x)=g(x^\perp)$. Let $\psi$ be the unique solution of $$\Delta\psi(x)=g(x), \,\,\,|x|<1$$
$$\psi(x)=0, \,\,\, |x|=1.$$
Then, $$\sup_{x\in B_1(0)}\frac{|\nabla\psi(x)|}{|x|}\leq C|g|_{L^\infty},$$ for some universal constant $C>0$. 
\end{cor}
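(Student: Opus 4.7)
The plan is to reduce Corollary 3.2 to Lemma 3.1 via a decomposition $\psi=\tilde\psi+h$, where $\tilde\psi$ is a whole-plane solution handled by the lemma and $h$ is a harmonic correction enforcing the Dirichlet condition. First I would extend $g$ by zero outside $B_1(0)$ to get $\tilde g\in L^\infty(\mathbb{R}^2)$ with $|\tilde g|_{L^\infty}=|g|_{L^\infty}$; since $B_1(0)$ is rotation-invariant, the symmetry $\tilde g(x)=\tilde g(x^\perp)$ persists on all of $\mathbb{R}^2$. Then I would take $\tilde\psi$ to be the Newtonian potential of $\tilde g$, which lies in $L^\infty_{\mathrm{loc}}$ because $\tilde g$ is compactly supported, solves $\Delta\tilde\psi=\tilde g$ on $\mathbb{R}^2$, and inherits the symmetry $\tilde\psi(x)=\tilde\psi(x^\perp)$ via the change of variable $y\mapsto y^\perp$ in the defining integral. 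Applying Lemma 3.1 to $\tilde\psi$ together with the linearity of the Newtonian potential in $g$ gives
\[
\sup_{x\in B_1(0)}\frac{|\nabla\tilde\psi(x)|}{|x|}\le C|g|_{L^\infty}.
\]

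Next I would set $h=\psi-\tilde\psi$, so $h$ is harmonic in $B_1(0)$ with boundary values $h|_{\partial B_1}=-\tilde\psi|_{\partial B_1}$. Standard estimates on the Newtonian potential give $|\tilde\psi|_{L^\infty(\partial B_1(0))}\le C|g|_{L^\infty}$, hence $|h|_{L^\infty(B_1(0))}\le C|g|_{L^\infty}$ by the maximum principle. Moreover, uniqueness for the symmetric Dirichlet problem implies $\psi(x)=\psi(x^\perp)$, and hence $h(x)=h(x^\perp)$ as well.

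Now I would exploit the fact that $h$ is harmonic and expand it in the spherical harmonic series $h(r,\theta)=\sum_{k\ge 0}(a_k r^k\cos k\theta+b_k r^k\sin k\theta)$. The symmetry $h(r,\theta)=h(r,\theta+\pi/2)$ forces $a_k=b_k=0$ whenever $k\notin 4\mathbb{Z}$, so the non-constant part of $h$ begins at order $r^4$. Combined with the standard Poisson-formula coefficient bounds $|a_k|,|b_k|\le 2|h|_{L^\infty(B_1(0))}$, this yields $|\nabla h(x)|\le C|g|_{L^\infty}|x|^3$ on $B_{1/2}(0)$, in particular $|\nabla h(x)|/|x|\le C|g|_{L^\infty}$ there. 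Combined with the bound on $\tilde\psi$ this handles the region $|x|\le 1/2$.

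For $1/2\le |x|<1$ one has $|\nabla\psi(x)|/|x|\le 2|\nabla\psi(x)|$, so it suffices to bound $|\nabla\psi|_{L^\infty(B_1(0))}$; since $\partial B_1(0)$ is smooth, Calder\'on--Zygmund theory places $\psi\in W^{2,p}(B_1(0))$ for every $p<\infty$ with norm controlled by $|g|_{L^\infty}$, and Morrey embedding then gives $\psi\in C^{1,\alpha}(\overline{B_1(0)})$ with $|\nabla\psi|_{L^\infty}\le C|g|_{L^\infty}$. The hard part will be extracting the \emph{quantitative} constant $C|g|_{L^\infty}$ from Lemma 3.1, whose statement is only qualitative; this should follow either from the explicit Newtonian-potential form of the argument in Section 3, or, more abstractly, from the closed graph theorem applied to the linear operator $\tilde g\mapsto \nabla\tilde\psi(\cdot)/|\cdot|$ on the symmetry-invariant subspace of $L^\infty(\mathbb{R}^2)$.
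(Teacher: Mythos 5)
Your proposal is correct and follows what is essentially the intended route: the paper states the corollary without a separate proof, treating it as a direct consequence of Lemma 3.1, and your decomposition (extend $g$ by zero, take the Newtonian potential, control the harmonic corrector via its four-fold symmetry, the maximum principle, and the vanishing of low-order Fourier modes, and use standard up-to-the-boundary elliptic regularity for $\tfrac12\le|x|<1$) is the natural way to fill in that deduction. Your observation that the quantitative constant $C|g|_{L^\infty}$ must be read off from the explicit kernel estimate \eqref{GeneralInequality} in the proof of Lemma 3.1, rather than from the lemma's purely qualitative statement, is exactly the right fix.
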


\begin{rema}
We have proven the lemma here in full detail for the case when $g$ enjoys four-fold symmetry. The general case of $n-$ fold symmetry for $n\geq 3$ is handled in Section 4.
Note that it need not be assumed that $g$ satisfies the symmetry condition in all of $\mathbb{R}^2$, just in a neighborhood of the origin. This says that if $g$ is symmetric in a neighborhood of any of its jump discontinuities, the corresponding $\psi$ will satisfy the bounds in the lemma. 
\end{rema}
\begin{rema}
The proof we give is based on the Green's function for the Laplacian. It is possible that other proofs could be given that might give some insight into how this result could be extended to more general linear elliptic equations. Extending this result to, say, divergence-form elliptic equations with bounded measurable coefficients (under some symmetry assumption) might be possible.   
\end{rema}
\begin{rema}
This result can be seen as dual to the "Key Lemma" of  Kiselev and Sverak in \cite{KS}.
\end{rema}


\begin{proof}
Standard considerations about harmonic functions allow us to assume that $g$ actually has compact support. Using the Green's function for the Dirichlet problem on $\mathbb{R}^2$ we see:
$$\nabla \psi(x)=\int_{\mathbb{R}^2} \frac{x-y}{|x-y|^2}g(y)dy.$$
By using the symmetries of $g$, we get:
$$\int_{\mathbb{R}^2}\Big( \frac{(x-y^\perp)}{|x-y^\perp|^2}+ \frac{(x-y)}{|x-y|^2}+ \frac{(x+y^\perp)}{|x+y^\perp|^2}+ \frac{(x+y)}{|x+y|^2}\Big)g(y)dy.$$
Now we simplify the kernel:
$$\frac{(x-y^\perp)}{|x-y^\perp|^2}+ \frac{(x-y)}{|x-y|^2}+ \frac{(x+y^\perp)}{|x+y^\perp|^2}+ \frac{(x+y)}{|x+y|^2}$$ $$= \frac{|x+y|^2(x-y)+|x-y|^2(x+y)}{|x-y|^2|x+y|^2}+ \frac{|x+y^\perp|^2(x-y^\perp)+|x-y^\perp|^2(x+y^\perp)}{|x-y^\perp|^2|x+y^\perp|^2}$$
$$= \frac{x ( 2|x|^2+2|y|^2)-4y(x\cdot y)}{|x-y|^2|x+y|^2}+ \frac{x ( 2|x|^2+2|y|^2)-4y^\perp(x\cdot y^\perp)}{|x-y^\perp|^2|x+y^\perp|^2}$$
$$= 2x(|x|^2+|y|^2)(\frac{1}{|x-y|^2|x+y|^2}+ \frac{1}{|x-y^\perp|^2|x+y^\perp|^2})-\frac{4y (x\cdot y)}{|x-y|^2|x+y|^2}-\frac{4y^\perp (x\cdot y^\perp)}{|x-y^\perp|^2|x+y^\perp|^2}$$
Now, $$|x-y|^2|x+y|^2+|x-y^\perp|^2|x+y^\perp|^2=2(|x|^2+|y|^2)^2-4(x\cdot y)^2-4(x\cdot y^\perp)^2=2(|x|^2 +|y|^2)^2-4|x|^2|y|^2$$ $$=2(|x|^4+|y|^4).$$
Moreover, $$4y^\perp(x\cdot y^\perp)|x-y|^2|x+y|^2+4y(x\cdot y)|x-y^\perp|^2|x+y^\perp|^2$$ $$=y^\perp(x\cdot y^\perp)((|x|^2+|y|^2)^2-2(x\cdot y)^2)+ y(x\cdot y)((|x|^2+|y|^2)^2-2(x\cdot y^\perp)^2).$$
So combining the above terms we get the following kernel: $$\frac{4x(|x|^2+|y|^2)(|x|^4+|y|^4)-4y^\perp(x\cdot y^\perp)((|x|^2+|y|^2)^2-2(x\cdot y)^2)-4y(x\cdot y)((|x|^2+|y|^2)^2-2(x\cdot y^\perp)^2)}{|x-y|^2|x+y|^2|x-y^\perp|^2|x+y^\perp|^2}.$$

Let's focus on the lowest order terms in $x$. We see that the first order terms are:
$$4x|y|^6-4y^\perp(x\cdot y^\perp)|y|^4-4y(x\cdot y)|y|^4=0.$$ 
The third order terms are:
$$4x|x|^2|y|^4-4y^\perp(x\cdot y^\perp)(2|x|^2|y|^2-2(x\cdot y)^2)-4y(x\cdot y)(2|x|^2|y|^2-2(x\cdot y^\perp)^2)$$
$$=-4x|x|^2|y|^4+8y^\perp(x\cdot y^\perp)(x\cdot y)^2+8y(x\cdot y) (x\cdot y^\perp)^2$$
The fifth order terms are:
$$ 4x|x|^4|y|^2-4y^\perp(x\cdot y^\perp)|x|^4-4y(x\cdot y)|x|^4=0.$$
The seventh order term is:
$$4x|x|^6.$$
Hence we get that the kernel simplifies to:
$$\frac{4x|x|^6-4x|x|^2|y|^4+8y^\perp(x\cdot y^\perp)(x\cdot y)^2+8y(x\cdot y)(x\cdot y^\perp)^2}{|x-y|^2|x+y|^2|x-y^\perp|^2|x+y^\perp|^2}$$
Next, our goal is to show:
\begin{equation}\label{GeneralInequality} \int_{B_{10}(0)}|\frac{4x|x|^6-4x|x|^2|y|^4+8y^\perp(x\cdot y^\perp)(x\cdot y)^2+8y(x\cdot y)(x\cdot y^\perp)^2}{|x-y|^2|x+y|^2|x-y^\perp|^2|x+y^\perp|^2}|dy \leq C |x|\end{equation} for some universal constant $C$. This will imply the lemma.
This can actually be done easily by scaling. Indeed, changing variables in the integral $y\rightarrow z |x|,$ we see that \eqref{GeneralInequality} becomes:
$$|x|\int_{B_\frac{10}{|x|}(0)} \Big|\frac{4\frac{x}{|x|}-4\frac{x}{|x|}|z|^4+8z^\perp(\frac{x}{|x|}\cdot z^\perp)(\frac{x}{|x|}\cdot z)^2+8z(\frac{x}{|x|}\cdot z)(\frac{x}{|x|}\cdot z^\perp)^2}{|\frac{x}{|x|}-z|^2|\frac{x}{|x|}+z|^2|\frac{x}{|x|}-z^\perp|^2|\frac{x}{|x|}+z^\perp|^2}\Big|dz.$$

For fixed $x$, this integral is bounded by a universal constant $C$. This is because we may split the integral into an integral in the region where $|z|<2$, where there are singularities in the denominator, and an integral in the region $|z|>2$ where there are no singularities in the denominator. The $|z|>2$ piece is clearly bounded since the integrand is like $|z|^{-4}$ for $|z|>2$. 
The $|z|<2$ case is bounded because the singularities are far from each other so we may restrict to only one of them at a time, say $z\approx \frac{x}{|x|}.$ In this case the numerator is actually 0 hence the singularity becomes like $\frac{1}{|z-\frac{x}{|x|}|}$ which is integrable in two dimensions with a bound independent of $x$. The cases when $z\approx -\frac{x}{|x|}$, $z\approx \frac{x^\perp}{|x|}$, and $z\approx -\frac{x^\perp}{|x|}$ are similar.  

To clarify this point further, we can take the simple case of $x= |x|(1,0).$
In this case, the integral in question becomes:
$$\int_{B_\frac{10}{|x|}(0)} \Big|\frac{(1,0)(4-4|z|^4)+8z^\perp((1,0)\cdot z^\perp)((1,0)\cdot z)^2+8z((1,0)\cdot z)((1,0)\cdot z^\perp)^2}{|(1,0)-z|^2|(1,0)+z|^2|(1,0)-z^\perp|^2|(1,0)+z^\perp|^2}\Big|dz$$ which is controlled by a constant multiple of:
$$\int_{\mathbb{R}^2}\frac{|1-|z|^4|}{|(1,0)-z|^2|(1,0)+z|^2|(1,0)-z^\perp|^2|(1,0)+z^\perp|^2}dz$$ $$+\int_{\mathbb{R}^2}|z|\frac{|(1,0)\cdot z^\perp||(1,0)\cdot z|^2}{|(1,0)-z|^2|(1,0)+z|^2|(1,0)-z^\perp|^2|(1,0)+z^\perp|^2}dz $$ $$+\int_{\mathbb{R}^2}|z|\frac{|(1,0)\cdot z| |(1,0)\cdot z^\perp|^2}{|(1,0)-z|^2|(1,0)+z|^2|(1,0)-z^\perp|^2|(1,0)+z^\perp|^2}dz$$
It is clear that all these quantities are finite as explained above. 

\end{proof}

\section{Extensions}

In this section we discuss a few extensions of Lemma 3.1.
\begin{lemm}
Let $g\in L^1\cap L^\infty(\mathbb{R}^2)$. Assume that $g$ is $m-$fold symmetric, for some $m\in \mathbb{N}$, $m\geq 3$. 
Let $\psi\in L^\infty$ solve $$\Delta \psi =g.$$
Then, $$\sup_{x\in B_1(0)}\frac{|\nabla \psi(x)|}{|x|}+ \sup_{x\in B_1(0)} \frac{|\psi(x)-\psi(0)|}{|x|^2} <\infty.$$ 
\end{lemm}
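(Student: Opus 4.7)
The strategy is to extend the symmetrization in the proof of Lemma 3.1 to the full cyclic group of rotations by $2\pi/m$. Write $R$ for rotation by $2\pi/m$ and reduce to compactly supported $g$ via the $L^1$ hypothesis and the usual harmonic function considerations. Applying the Newtonian potential formula, changing variables $y\mapsto R^k y$, invoking the $m$-fold symmetry of $g$, and averaging over $k\in\{0,\dots,m-1\}$ yields
\begin{equation*}
\nabla\psi(x) = \frac{1}{2\pi m}\int_{\mathbb{R}^2} K_m(x,y)g(y)\,dy,\qquad K_m(x,y):=\sum_{k=0}^{m-1}\frac{x-R^k y}{|x-R^k y|^2}.
\end{equation*}
The gradient estimate then reduces to the kernel bound $\int|K_m(x,y)|\,dy\leq C|x|$ for small $|x|$.

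The algebraic simplification that occupied the bulk of Lemma 3.1 becomes transparent in complex notation. Identifying $\mathbb{R}^2\cong\mathbb{C}$ via $x\leftrightarrow z$, $y\leftrightarrow w$, and writing $\omega=e^{2\pi i/m}$, the summand $(x-R^k y)/|x-R^k y|^2$ corresponds to $1/(\bar z-\omega^{-k}\bar w)$. The factorization $\bar z^m-\bar w^m=\prod_{k=0}^{m-1}(\bar z-\omega^{-k}\bar w)$ and logarithmic differentiation in $\bar z$ give
\begin{equation*}
K_m(x,y)\leftrightarrow\frac{m\bar z^{m-1}}{\bar z^m-\bar w^m},\qquad |K_m(x,y)|=\frac{m|x|^{m-1}}{|z^m-w^m|}.
\end{equation*}
The vanishing of order $m-1$ at $x=0$ is precisely what drives the estimate. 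Rescaling $y=|x|\eta$ gives the homogeneity identity $K_m(x,|x|\eta)=|x|^{-1}K_m(\hat x,\eta)$ with $\hat x=x/|x|$, so the whole bound reduces to showing that $\int|K_m(\hat x,\eta)|\,d\eta$ is a universal constant. For $|\eta|\geq 2$ the closed form gives $|K_m(\hat x,\eta)|\leq C|\eta|^{-m}$, integrable on $\mathbb{R}^2\setminus B_2$ precisely because $m\geq 3$; for $|\eta|<2$ there are $m$ simple-pole singularities at $\eta=\omega^{-k}\hat x$ on the unit circle, each locally of integrable type $C/|\eta-\omega^{-k}\hat x|$, contributing a uniform constant by rotation invariance in $\hat x$.

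The second estimate follows from the first by the fundamental theorem of calculus along the segment $[0,x]$: one has $\psi(x)-\psi(0)=\int_0^1\nabla\psi(tx)\cdot x\,dt$, and the just-proved bound $|\nabla\psi(tx)|\leq Ct|x|$ integrates to $|\psi(x)-\psi(0)|\leq C|x|^2$. The principal obstacle is the algebraic identification of the symmetrized kernel; the real-variable expansion that succeeds for $m=4$ in Lemma 3.1 becomes combinatorially prohibitive for general $m$, and the complex-variable rewriting is what makes the extension tractable.
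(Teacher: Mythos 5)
Your argument is correct, and it follows the paper's overall scaffolding — reduce to the Newtonian potential, symmetrize over the rotation group, rescale $y=|x|\eta$, and observe that $m\geq 3$ is exactly what makes the tail integrable while the singularities on the unit circle are of locally integrable type $|\eta-\omega^{-k}\hat x|^{-1}$ — but you execute the key algebraic step by a genuinely different device. The paper never produces a closed form of the symmetrized kernel for general $m$: it puts the sum over a common denominator, isolates the highest-order terms in $y$, and shows they cancel via the separate trigonometric identity (Lemma 4.3, i.e. $\sum_i \mathcal{O}^i y(\mathcal{O}^i y,x)=\tfrac{m}{2}x|y|^2$), after which it asks the reader to "follow the proof of Lemma 3.1" to finish. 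Your logarithmic-differentiation identity $\sum_{k}(\bar z-\omega^{-k}\bar w)^{-1}=m\bar z^{m-1}/(\bar z^m-\bar w^m)$ replaces both Lemma 4.3 and the deferred case analysis with a single exact formula $|K_m(x,y)|=m|x|^{m-1}/|z^m-w^m|$, which simultaneously exhibits the degree $-1$ homogeneity, the vanishing of order $m-1$ at $x=0$, the $|y|^{-m}$ decay (hence the precise role of $m\geq 3$, and the logarithmic failure at $m=2$ noted in Remark 4.2), and the simple-pole structure at the $m$ rotated singularities; I checked the identity against the unsymmetrized sum at sample points and it is exact, whereas the paper's expanded $m=4$ numerator contains constant-factor slips. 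You also supply two items the paper leaves implicit: the bound $|\psi(x)-\psi(0)|\leq C|x|^2$ via the fundamental theorem of calculus (legitimate since $\Delta\psi\in L^\infty$ gives $\psi\in C^{1,\alpha}_{loc}$), and an easy disposal of the far-field, since $|K_m(x,y)|\leq m|x|^{m-1}/(|y|^m-|x|^m)\leq C|x|$ for $|y|\geq 10$, $|x|\leq 1$, integrated against $\|g\|_{L^1}$ — so the compact-support reduction is not even strictly needed beyond fixing the harmonic part, which, as in the paper, is constant because $\psi\in L^\infty$ and the Newtonian potential of an $L^1\cap L^\infty$ function grows at most logarithmically. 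In short: same skeleton, cleaner and more complete key computation.
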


\begin{rema}
Note that the lemma is false in the case $m=2$ via the counterexamples in Section 2. 
\end{rema}

\begin{proof}

As before, $$\nabla \psi= \int_{\mathbb{R}^2} \frac{x-y}{|x-y|^2} g(y)dy.$$
Since $g$ is $m-$symmetric, 
$$\nabla \psi = \frac{1}{m} \int_{\mathbb{R}^2} \sum_{i=1}^m\frac{x-\mathcal{O}^iy}{|x-\mathcal{O}^iy|^2}g(y)dy.$$

\noindent\emph{Claim:}

$$\frac{1}{m} \int_{B_{10}(0)}\Big|\sum_{i=1}^m \frac{x-\mathcal{O}^iy}{|x-\mathcal{O}^iy|^2} \Big|  dy \leq C |x|.$$ for some universal constant $C$. 

\vspace{5mm}

\noindent\emph{Proof of the Claim:}

Let's first see why this fails when $m=2$. This is because we get:
$$\frac{1}{2}\int_{B_{10}(0)}|\frac{(x+y)|x-y|^2+(x-y)|x+y|^2}{|x-y|^2|x+y|^2}|dy=\int_{B_{10}(0)}|\frac{2x(|x|^2+|y|^2)-4y(x\cdot y)}{|x-y|^2|x+y|^2} |dy.$$
Now we scale in $|x|$ and we see:
$$|x| \int_{B_{\frac{1}{|x|}}}|\frac{2\frac{x}{|x|}(1+|z|^2)-4z(\frac{x}{|x|}\cdot z)}{|\frac{x}{|x|}-z|^2 |\frac{x}{|x|}+z|^2}dz.$$
The problem is that for $z$ very large (up to $\frac{1}{|x|}$) there is a large contribution from the integral which is of size $\log\frac{1}{|x|}$ since the integrand is only like $|z|^{-2}$ which is not integrable in two dimensions for $z$ large. Hence, to show the boundedness for higher symmetries we will need to focus on cancellations for the highest order terms in $y$.

For $m\geq 3$ we see: 

$$\sum_{i=1}^m \frac{x-\mathcal{O}^iy}{|x-\mathcal{O}^iy|^2}=\frac{\sum_{i=1}^m(x-\mathcal{O}^iy)\Pi_{j\not=i}|x-\mathcal{O}^jy|^2}{\Pi_{i=1}^m |x-\mathcal{O}^iy|^2},$$
where we denote by $\mathcal{O}$ the rotation matrix which rotates vectors in $\mathbb{R}^2$ counterclockwise by $\frac{2\pi}{m}$.
Remember that we are only concerned with the highest order terms in $y$ (which are the lowest order terms in $x$). We get that the highest order terms in $y$ of $\sum_{i=1}^m (x-\mathcal{O}^iy) \Pi_{j\not=i}|x-\mathcal{O}^jy|^2$ are:
$$m x |y|^{2m-2} + \sum_{i=1}^m \mathcal{O}^i y|y|^{2m-4}\sum_{j\not=i}2(x,\mathcal{O}^jy)= |y|^{2m-4} \Big( mx|y|^2+ 2\sum_{i=1}^{m} \sum_{j\not=i}\mathcal{O}^i y (\mathcal{O}^jy,x)  \Big)$$
Note that $\sum_{i\not=j} O^jy=-O^i y.$ Hence, $$\sum_{i=1}^{m}\sum_{i\not=i} O^i y(O^jy,x)=-\sum_{i=1}^m\mathcal{O}^iy(\mathcal{O}^iy,x).$$
Now we use a simple lemma.

\begin{lemm}
For $m\geq 3,$ let $\mathcal{O}$ be the linear transformation on $\mathbb{R}^2$ which rotates vectors counterclockwise by $\frac{2\pi}{m}.$ Then, for any $x,y\in \mathbb{R}^2$, 
$$\sum_{i=0}^{m-1} \mathcal{O}^i y(\mathcal{O}^iy,x)=\frac{m}{2} x |y|^2.$$
\end{lemm}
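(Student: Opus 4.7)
The plan is to recognize the sum as a linear operator in $x$. Set
$$M(y) := \sum_{i=0}^{m-1} (\mathcal{O}^i y)(\mathcal{O}^i y)^T,$$
so that $\sum_{i=0}^{m-1} \mathcal{O}^i y(\mathcal{O}^i y,x) = M(y)\,x$. Since both sides of the identity to be proved are linear in $x$, it suffices to establish the $x$-free matrix equation
$$M(y) = \frac{m}{2}|y|^2\, I.$$

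The first step is to exploit the cyclic symmetry of the sum: reindexing $i\mapsto i+1\pmod m$ and using that $\mathcal{O}$ preserves the inner product gives $\mathcal{O}\, M(y)\, \mathcal{O}^{-1} = M(y)$, i.e.\ $M(y)$ commutes with $\mathcal{O}$. The second, more conceptual step is a small representation-theoretic observation: for $m\geq 3$, the rotation $\mathcal{O}$ has no real eigenvector (its eigenvalues are the non-real complex numbers $e^{\pm 2\pi i/m}$), so any real symmetric $2\times 2$ matrix commuting with $\mathcal{O}$ must be a scalar multiple of $I$. Since $M(y)$ is manifestly symmetric, this forces $M(y) = c(y)\, I$ for some scalar $c(y)$.

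Finally, I would identify $c(y)$ by taking traces: on the one hand $\mathrm{tr}\,M(y) = \sum_{i=0}^{m-1} |\mathcal{O}^i y|^2 = m|y|^2$, and on the other hand $\mathrm{tr}(c(y) I) = 2 c(y)$. Hence $c(y) = \tfrac{m}{2}|y|^2$, which is exactly the desired identity.

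I do not expect a serious technical obstacle; the one point requiring care is the step \emph{``symmetric real matrix commuting with $\mathcal{O}$ is a scalar''}, which is precisely where the hypothesis $m\geq 3$ is used. When $m=2$ one has $\mathcal{O}=-I$, which commutes with every matrix, so the scalar-multiple conclusion fails — in agreement with the counterexamples of Section 2 that rule out the case of two-fold symmetry throughout the paper.
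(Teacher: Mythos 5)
Your proof is correct, but it takes a different route from the paper. The paper argues computationally: by homogeneity it reduces to $|y|=1$, then (using that rotations commute) to $y=(1,0)$, at which point the identity becomes the three trigonometric sums $\sum_{i=0}^{m-1}\cos^2\bigl(\tfrac{2\pi i}{m}\bigr)=\tfrac{m}{2}$, $\sum_{i=0}^{m-1}\sin^2\bigl(\tfrac{2\pi i}{m}\bigr)=\tfrac{m}{2}$ and $\sum_{i=0}^{m-1}\sin\bigl(\tfrac{4\pi i}{m}\bigr)=0$, which are checked by geometric-sum formulas. You instead package the sum as the symmetric matrix $M(y)=\sum_i(\mathcal{O}^i y)(\mathcal{O}^i y)^T$, observe by reindexing that $M(y)$ commutes with $\mathcal{O}$, invoke the Schur-type fact that for $m\geq 3$ the rotation $\mathcal{O}$ has no real eigenvector so a real symmetric matrix commuting with it must be scalar, and then fix the scalar by the trace computation $\mathrm{tr}\,M(y)=m|y|^2$. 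Each step of your argument is sound (the commutation uses $\mathcal{O}^{-1}=\mathcal{O}^T$ and $\mathcal{O}^m=I$; the scalar claim follows since an eigenspace of $M(y)$ is $\mathcal{O}$-invariant and cannot be a line). What your approach buys is conceptual clarity: it makes transparent exactly where $m\geq 3$ enters (for $m=2$, $\mathcal{O}=-I$ commutes with everything, matching the failure of the lemma there), and it generalizes readily, e.g.\ to any finite subgroup of $SO(2)$ of order at least $3$ or to sums over group orbits in higher dimensions with irreducible action. What the paper's approach buys is that it is entirely elementary, requiring nothing beyond explicit trigonometric summation, and the displayed identities exhibit concretely the cancellation that the rest of Section 4 exploits.
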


Assuming this Lemma is true, we then see that the highest order terms in $y$ disappear. One can then follow the proof of Lemma 3.1 to conclude. We leave the details to the interested reader. 
\end{proof}

\begin{proof} [Proof Of Lemma 4.3]

First, by scaling it suffices to assume that $|y|=1$. Furthermore, since rotation matrices on $\mathbb{R}^2$ commute, it suffices to assume that $y=(1,0)$. 
The lemma then comes down to proving the following three identities:
$$\sum_{i=0}^{m-1} (\mathcal{O}^i (1,0)\cdot(1,0))^2 = \frac{m}{2},$$ 
$$\sum_{i=0}^{m-1} (\mathcal{O}^i (0,1)\cdot(0,1))^2 = \frac{m}{2},$$ 
and 
$$\sum_{i=0}^{m-1}(\mathcal{O}^i (1,0)\cdot (0,1))(\mathcal{O}^i (1,0)\cdot (1,0))=0.$$
These are equivalent to the three identities:
$$\sum_{i=0}^{m-1} \cos^2(\frac{2\pi i}{m})=\frac{m}{2},$$
$$\sum_{i=0}^{m-1} \sin^2(\frac{2\pi i}{m})=\frac{m}{2},$$ and 
$$\sum_{i=0}^{m-1} \sin(\frac{4\pi i}{m})=0,$$ respectively. Here is where the difference between $m=2$ and $m\geq 3$ becomes manifest. These identities can be checked using formulas for geometric sums.  

\end{proof}

\subsection{$L^\infty$ estimates for $D^2\psi$ when $g$ is smooth away from isolated points}

A natural question which could be asked is: \emph{If $\psi$ and $g$ are as in Lemma 4.1 and $g\in C^{\infty}(\mathbb{R}^2-\{0\})$ is $D^2 \psi$ necessarily bounded?} While it might seem that this is true, counterexamples can be constructed rather simply. However, if we give a natural constraint on the growth of $\nabla g$ near $0$, then $D^2 \psi$ is necessarily bounded.

\begin{lemm}
Let $g\in C^{\infty}_c(\mathbb{R}^2-\{0\})\cap L^\infty(\mathbb{R}^2).$ Assume that $g$ is $m$-fold symmetric near $0$ for some $m\geq 3$ and $$|x||\nabla g(x)|\in L^\infty_{loc}.$$  Let $\psi$ be an $L^\infty_{loc.}$ solution of:
$$\Delta \psi=g$$
on $\mathbb{R}^2.$
Then, $$D^2 \psi\in L^\infty_{loc}(\mathbb{R}^2).$$
\end{lemm}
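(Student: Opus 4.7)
The plan is to combine Lemma 4.1 with a scaling argument and standard interior elliptic regularity. Since $g \in C^\infty(\mathbb{R}^2 \setminus \{0\})$, classical interior Schauder theory tells us $\psi \in C^\infty(\mathbb{R}^2 \setminus \{0\})$, so $D^2 \psi$ is already locally bounded on every compact subset avoiding the origin. The whole problem reduces to showing that $D^2 \psi$ stays bounded in some punctured neighborhood of $0$. After subtracting the constant $\psi(0)$ (which does not affect $\Delta \psi$), Lemma 4.1 provides the two critical bounds $|\psi(x)| \leq C|x|^2$ and $|\nabla \psi(x)| \leq C|x|$ on $B_1(0)$; the first of these is what will let a naive rescaling remain uniformly controlled.

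For $0 < r \ll 1$ and a fixed unit vector $x_0$, define the rescalings
$$\psi_r(x) := r^{-2}\psi(rx), \qquad g_r(x) := g(rx),$$
so that $\Delta \psi_r = g_r$ and $D^2 \psi_r(x) = D^2 \psi(rx)$. On the ball $B_{1/2}(x_0) \subset \mathbb{R}^2 \setminus \{0\}$, Lemma 4.1 gives $|\psi_r(x)| = r^{-2}|\psi(rx)| \leq C|x|^2 \leq C'$ uniformly in $r$. The hypothesis $|y||\nabla g(y)| \leq C_0$ rescales to
$$|\nabla g_r(x)| = r\,|\nabla g(rx)| = \frac{|rx|\,|\nabla g(rx)|}{|x|} \leq \frac{C_0}{|x|} \leq 2C_0 \quad \text{on } B_{1/2}(x_0),$$
so $g_r$ is uniformly Lipschitz (hence uniformly $C^{0,\alpha}$) on $B_{1/2}(x_0)$, with $\|g_r\|_{L^\infty} \leq \|g\|_{L^\infty}$ in addition. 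All of these bounds are independent of $r$ and of the choice of $x_0$ on the unit circle.

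Interior Schauder estimates for the Poisson equation on $B_{1/2}(x_0)$ then yield
$$\|D^2 \psi_r\|_{L^\infty(B_{1/4}(x_0))} \leq C\bigl(\|g_r\|_{C^{0,\alpha}(B_{1/2}(x_0))} + \|\psi_r\|_{L^\infty(B_{1/2}(x_0))}\bigr) \leq C'',$$
with $C''$ independent of $r$ and $x_0$. Evaluating at $x_0$ and unscaling, this says $|D^2 \psi(rx_0)| \leq C''$ for every small $r$ and every $|x_0| = 1$, which is precisely uniform boundedness of $D^2 \psi$ on a punctured neighborhood of $0$. Combined with smoothness away from the origin, this proves $D^2 \psi \in L^\infty_{loc}(\mathbb{R}^2)$.

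The main obstacle is conceptual rather than computational: one must identify the exact role of the hypothesis $|x||\nabla g(x)| \in L^\infty_{loc}$. It is precisely the scaling-critical assumption that keeps $\nabla g_r$ uniformly bounded after rescaling, so that Schauder estimates close with $r$-independent constants. The $m$-fold symmetry enters only through Lemma 4.1, which supplies the $|\psi(x)| \leq C|x|^2$ bound needed to keep $\psi_r$ uniformly bounded; without it, the rescaling would blow up and the whole argument would collapse, consistent with the counterexamples of Section 2.
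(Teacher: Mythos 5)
Your argument is correct, but it is a genuinely different route from the paper's. The paper proves Lemma 4.4 by differentiating under the integral and estimating the singular integral for $D^2\psi$ directly, namely $\int \frac{(x_1-y_1)(x_2-y_2)}{|x-y|^4}\,(g(y)-g(x))\,dy$, split into the near region $|x-y|\leq \frac{|x|}{2}$, where the scaling-critical hypothesis gives $|g(y)-g(x)|\leq C\frac{|x-y|}{|x|}$ and tames the singularity, and the far region $|x-y|\geq \frac{|x|}{2}$, where the $m$-fold symmetry is exploited again at the kernel level (as in the proof of Lemma 4.1) to gain powers of $|x|$ in the numerator. You instead rescale, $\psi_r(x)=r^{-2}\psi(rx)$, $g_r(x)=g(rx)$, and close with interior Schauder estimates on balls around the unit circle; the symmetry enters only once, through the quadratic bound $|\psi(x)-\psi(0)|\leq C|x|^2$ of Lemma 4.1, while $|x|\,|\nabla g(x)|\in L^\infty_{loc}$ is exactly what keeps $\|g_r\|_{C^{0,\alpha}}$ uniformly bounded on the rescaled annulus. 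This buys you a cleaner and essentially complete argument with no further kernel computation (the paper's far-region cancellation is repackaged entirely inside Lemma 4.1), and it makes the scaling-critical role of the gradient hypothesis transparent; the paper's approach stays closer to the explicit Green's-function computations used elsewhere in the note and avoids invoking Schauder theory. One small caveat: Lemma 4.1 as stated assumes $g$ symmetric on all of $\mathbb{R}^2$ and $\psi\in L^\infty$, whereas Lemma 4.4 assumes symmetry only near $0$ and $\psi\in L^\infty_{loc}$; to apply it you should first split $g=g\chi_{B_\delta}+g\chi_{B_\delta^c}$ (the potential of the far part is smooth near the origin, in the spirit of Remark 3.3) and subtract the full first-order Taylor polynomial of $\psi$ at $0$, not just the constant $\psi(0)$, since without global symmetry $\nabla\psi(0)$ need not vanish. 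This is harmless for your scheme because affine functions are annihilated by both $\Delta$ and $D^2$, so the rescaled functions remain uniformly bounded and the Schauder step goes through unchanged.
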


\begin{rema}
The assumptions of the lemma are sharp in the sense that there are, for each $\alpha>0$, counterexamples with $|\nabla g(x)|\approx |x|^{-1-\alpha}$ near 0.  
\end{rema}

\begin{proof}
By rotation invariance it suffices to prove that $$\int_{\mathbb{R}^2} \frac{(x_1-y_1)(x_2-y_2)}{|x-y|^4} (g(y)-g(x))dy$$ is bounded independent of $x$. By assumption, the following must hold if $|x-y|\leq\frac{|x|}{2}:$ $$|g(y)-g(x)|\leq C \frac{|x-y|}{|x|} .$$
In particular, if $|x-y|$ is much smaller than $|x|$, we can replace one of the powers of $|x-y|$ in the singularity by $|x|$ and be essentially in the same scenario as Lemma 4.1. In the case when $|x-y|\geq \frac{|x|}{2},$ the symmetry can be used to gain enough powers of $|x|$ in the numerator to, again, remove the singularity. 
\end{proof}

\subsection{A counterexample}

We will now construct a counterexample to Lemma 4.4 when the growth assumption on $\nabla g$ near 0 is removed. 

\begin{prop}
There exists $\psi$ which is 4-fold symmetric and smooth away from 0 for which $\Delta \psi\in L^\infty$ but $D^2 \psi\not\in L^\infty_{loc}$.
\end{prop}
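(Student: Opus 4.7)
The idea is to build $\psi$ as an infinite superposition of smooth, four-fold symmetric, scaled-down copies of the classical $P\log|\cdot|$ counterexample, piled up on annular shells $r_n\to 0$. Each copy is genuinely smooth (no point singularities off the origin), so the sum remains smooth on $\mathbb{R}^2\setminus\{0\}$ while the $D^2$ of the individual pieces is made to grow without bound, concentrating the failure of regularity only at the origin.

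\textbf{Step 1: a smooth local counterexample.} Fix the harmonic quadratic $Q(y):=(y_1-1)y_2$ centered at $p=(1,0)$, a cutoff $\eta\in C_c^\infty(B_{1/4}(p))$ with $\eta\equiv 1$ on $B_{1/8}(p)$, and the regularized logarithm $\log_\epsilon(r):=\tfrac12\log(r^2+\epsilon^2)$. With $\epsilon_n:=e^{-n}$ define
\[
F_n(y):=Q(y)\,\log_{\epsilon_n}(|y-p|)\,\eta(y)\in C^\infty(\mathbb{R}^2).
\]
Using $\Delta Q=0$, Euler's identity in the form
$\nabla Q\cdot\nabla\log_\epsilon(|y-p|)=2Q/(|y-p|^2+\epsilon^2)$
(since $Q$ is homogeneous of degree $2$ in $y-p$), and the explicit formula
$\Delta\log_\epsilon(|y-p|)=2\epsilon^2/(|y-p|^2+\epsilon^2)^2$,
one checks that $\|\Delta F_n\|_{L^\infty}\le C$ uniformly in $n$; the commutator contributions from $\nabla\eta,\Delta\eta$ are bounded independently of $n$ because $\log_{\epsilon_n}$ and $\nabla\log_{\epsilon_n}$ are uniformly bounded on $\mathrm{supp}\,\nabla\eta\subset\{|y-p|\ge 1/8\}$. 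On the other hand, $\partial_1\partial_2 Q\equiv 1$ and every other term in $\partial_1\partial_2 F_n$ involves a factor vanishing at $y=p$, so $\partial_1\partial_2 F_n(p)=\log\epsilon_n=-n$, yielding $\|D^2 F_n\|_{L^\infty}\ge n$.

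\textbf{Step 2: symmetrize and rescale.} Let $\mathcal{O}$ denote rotation by $\pi/2$ and set $\tilde\psi_n(y):=\sum_{i=0}^{3}F_n(\mathcal{O}^{-i}y)$. The four balls $B_{1/4}(\mathcal{O}^i p)$ are pairwise disjoint, so the bounds of Step~1 persist: $\|\Delta\tilde\psi_n\|_\infty\le C$, $\|D^2\tilde\psi_n\|_\infty\ge n$, and $\tilde\psi_n$ is smooth, four-fold symmetric, supported in the annulus $\{3/4\le|y|\le 5/4\}$. Pick $r_n:=8^{-n}$ and define $\psi_n(x):=r_n^{2}\tilde\psi_n(x/r_n)$. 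This scaling is chosen precisely so that $\Delta\psi_n(x)=(\Delta\tilde\psi_n)(x/r_n)$ and $D^2\psi_n(x)=(D^2\tilde\psi_n)(x/r_n)$, hence $\|\Delta\psi_n\|_\infty\le C$ and $\|D^2\psi_n\|_\infty\ge n$, while $\mathrm{supp}\,\psi_n\subset\{3r_n/4\le|x|\le 5r_n/4\}$. By the choice of $r_n$ these supports are pairwise disjoint.

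\textbf{Step 3: assemble and verify.} Set $\psi(x):=\sum_{n\ge1}\psi_n(x)$. Since $\|\tilde\psi_n\|_\infty\le Cn$ gives $\|\psi_n\|_\infty\le Cn\,r_n^{2}$, which is summable, $\psi\in L^\infty$; each $\psi_n$ is $C^\infty$, and because the supports accumulate only at $0$, every $x\ne 0$ has a neighborhood on which the sum is finite, so $\psi\in C^\infty(\mathbb{R}^2\setminus\{0\})$. Four-fold symmetry is inherited termwise. Disjointness of supports gives $\|\Delta\psi\|_\infty=\sup_n\|\Delta\psi_n\|_\infty\le C$. Conversely, for each $n$ one can find $x_n\in\mathrm{supp}\,\psi_n$ (so $|x_n|\sim r_n\to 0$) with $|D^2\psi(x_n)|=|D^2\psi_n(x_n)|\ge n$; thus $D^2\psi$ is unbounded on every neighborhood of $0$, i.e., $D^2\psi\notin L^\infty_{\mathrm{loc}}(\mathbb{R}^2)$. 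The main obstacle is the uniform-in-$\epsilon_n$ control of $\|\Delta F_n\|_\infty$ in Step~1, which rests on the harmonicity of the degree-two polynomial $Q$; once it is established, the rescaling and the disjoint-support telescoping of Step~3 are routine.
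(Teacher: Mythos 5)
Your construction is correct and is essentially the same as the paper's: an infinite sum of four-fold symmetrized, regularized "harmonic quadratic times $\log$" bumps with pairwise disjoint supports accumulating at the origin, so that the Laplacian stays uniformly bounded while the mixed second derivative at the bump centers equals the logarithm of the regularization parameter and blows up. The only difference is presentational — you do the estimate once at unit scale and then use the scaling $\psi_n(x)=r_n^2\tilde\psi_n(x/r_n)$, which keeps $\|\Delta\psi_n\|_{L^\infty}\le C$ exactly, whereas the paper works directly at scale $\epsilon$ with the prefactor $\epsilon$ and regularization $e^{-1/\epsilon^2}$, accepting the slightly lossier bound $\epsilon|\log\epsilon|$.
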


\begin{proof}

Consider the function $\tilde f^\epsilon(x_1,x_2)=\epsilon(x_1-\epsilon)x_2 \log ((x_1-\epsilon)^2+x_2^2 +e^{-1/\epsilon^2}) \phi^\epsilon(x_1,x_2)$ where $\phi$ is a smooth cut-off function satisfying $\nabla\phi^{\epsilon}\approx \frac{1}{\epsilon}$ and for which, $\phi^\epsilon\equiv 1$ in $B_{\frac{\epsilon}{10}}(\epsilon,0)$ and 0 outside of $B_{\frac{\epsilon}{2}} (\epsilon,0).$ 
Define $$f^\epsilon(x)=\tilde f^\epsilon(x)+\tilde f^\epsilon (x^\perp)+ \tilde f^\epsilon(-x)+\tilde f^\epsilon(-x^\perp).$$
It is clear that $f$ is 4-fold symmetric.
Moreover, we have $$|\Delta f^\epsilon|\leq \epsilon |\log |\epsilon||. $$
However, $$D^2 f^\epsilon(\epsilon,0)\approx \frac{1}{\epsilon}.$$
Now define $$u= \sum_{N=1}^\infty f^{\frac{1}{100^N}}.$$
Then, $$|\Delta \psi|_{L^\infty}\leq C,$$ $\psi$ is 4-fold symmetric, and $\psi$ is smooth away from $0$ but $D^2 \psi\not\in L^\infty$.  
\end{proof}

\section{An Application to Characteristic Functions of Domains with Corners}

In this section we prove that if we solve $\Delta \psi=\chi_{A}$ with $A$ a bounded Lipschitz domain whose boundary is $C^{1,\alpha}$ away from finitely many corners and which is symmetric around the corners, then $D^2\psi$ is uniformly bounded. Before proving this, we must define what we mean by corners. One way to do this is to say that locally (near the corner) $A$ just consists of two straight lines which form a corner. Taking this definition of a corner actually makes the result too restrictive since it will boil down to a situation like the one in Figure 1 on the right. We will call a corner formed by the intersection of finitely many lines of different slope a polygonal corner.  We will say that $A$ has a corner at a point if $\partial A$ consists of $C^{1,\alpha}$ curves which intersect at that point with different slopes. 

\begin{lemm}
Let $A\subset \mathbb{R}^2$ be a Lipschitz domain which is $C^{1,\alpha}$ away from a single corner at $(0,0)$. Assume that $A$ is invariant under rotation by $\frac{2\pi}{m}$ for some $m\geq 3$.  Let $u\in L^\infty$ solve
$$\Delta \psi=\chi_{A}.$$
Then, $$D^2\psi\in L^\infty$$ for some constant $C$ depending on $A$.   
\end{lemm}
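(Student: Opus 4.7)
The plan combines classical boundary regularity away from the corner with the symmetry-based cancellations of Sections~3--4 near the corner. Since $\partial A$ is $C^{1,\alpha}$ on $\mathbb{R}^2\setminus\{0\}$, the classical fact that the Newtonian potential of a characteristic function of a $C^{1,\alpha}$ domain has $D^2\psi\in L^\infty$ (cf.\ \cite{BertozziConstantin,Chemin}) gives $D^2\psi\in L^\infty$ on every compact subset of $\mathbb{R}^2\setminus\{0\}$. The task is to ensure this bound does not degenerate as $x\to 0$.

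Localize near the corner with a radial smooth cutoff $\phi$ satisfying $\phi\equiv 1$ on $B_\delta(0)$ and $\phi\equiv 0$ off $B_{2\delta}(0)$, and decompose $\chi_A=g_1+g_2$ with $g_1=\chi_A\phi$, $g_2=\chi_A(1-\phi)$, so that $\psi=\psi_1+\psi_2$ correspondingly. Since $g_2\equiv 0$ on $B_\delta(0)$, the function $\psi_2$ is harmonic there and $D^2\psi_2$ is bounded near $0$; combined with the classical step, $D^2\psi_2\in L^\infty$. Because $\phi$ is radial, $g_1$ is $m$-fold symmetric, bounded, and compactly supported, and Lemma~4.1 immediately gives $|\nabla\psi_1(x)|\leq C|x|$.

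To pass from $\nabla\psi_1$ to $D^2\psi_1$, symmetrize the singular integral:
$$\partial_i\partial_j\psi_1(x)=\frac{1}{m}\,\text{P.V.}\int_{\mathbb{R}^2}\Big(\sum_{k=0}^{m-1}\partial_i\partial_j G(x-\mathcal{O}^k y)\Big)\,g_1(y)\,dy,$$
where $G$ is the Newtonian kernel. The crucial cancellation is that the leading $O(|y|^{-2})$ part of the symmetrized kernel vanishes identically: writing $\partial_i\partial_j G(z)=c(\delta_{ij}|z|^2-2z_i z_j)/|z|^4$ and using the identity $\sum_{k=0}^{m-1}(\mathcal{O}^k y)_i(\mathcal{O}^k y)_j=\tfrac{m}{2}|y|^2\delta_{ij}$ from Lemma~4.3, one checks that $\sum_k\partial_i\partial_j G(-\mathcal{O}^k y)\equiv 0$ for $m\geq 3$. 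Taylor-expanding in $x$ then yields a symmetrized kernel of order $|x|\,|y|^{-3}$ on $|y|\gg|x|$, whose integral against $g_1$ is bounded uniformly in $x$; the complementary near-field region $|y|\ll|x|$ contributes at most a constant from the trivial area bound $|A\cap B_{|x|/2}|\leq C|x|^2$ against $|D^2 G(x)|\leq C|x|^{-2}$.

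The remaining contributions come from the $m$ principal-value singularities at $y=\mathcal{O}^k x$. Near each one, since $\mathcal{O}^k x\neq 0$, one faces a standard Calder\'on-Zygmund operator acting on $g_1$, which locally is the characteristic function of a set with $C^{1,\alpha}$ boundary multiplied by the smooth factor $\phi$; the classical bound of the first step gives a uniform local contribution. Assembling these estimates yields $D^2\psi\in L^\infty(\mathbb{R}^2)$. The main obstacle is the uniformity of the CZ contributions at the orbit $\{\mathcal{O}^k x\}$ as $x\to 0$ and the orbit collapses onto the corner, which requires a scaling argument to show that the $C^{1,\alpha}$ structure of $\partial A$ on balls of radius comparable to $|x|$ around a point of the orbit remains controllable independently of $|x|$; this is where the existence of a well-defined tangent cone (a union of sectors compatible with the $m$-fold symmetry) at the origin enters decisively.
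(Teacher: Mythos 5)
Your overall architecture matches the paper's: classical $C^{1,\alpha}$ vortex-patch regularity away from the corner, and near the corner a symmetrization of the second-derivative kernel whose leading part cancels by the identity of Lemma 4.3 (the same cancellation driving Section 4 and the Appendix); your computation that $\sum_{k=0}^{m-1}\partial_i\partial_j G(-\mathcal{O}^k y)\equiv 0$ for $m\geq 3$, the far-field Taylor bound of order $|x|\,|y|^{-3}$ on $|y|\geq 2|x|$, and the near-field area bound on $|y|\leq |x|/2$ are all correct. The genuine gap is exactly the one you flag yourself: the annulus $|y|\approx |x|$ containing the $m$ kernel singularities $y=\mathcal{O}^{-k}x$. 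There the symmetrization buys nothing --- each diagonal term is a full-strength principal value of a Calder\'on--Zygmund kernel integrated against $\chi_A$ near a point whose distance to the corner is only $|x|$ --- and you cannot simply invoke ``the classical bound of the first step'': that bound carries constants depending on the local $C^{1,\alpha}$ geometry of $\partial A$ at the scale of the ball on which it is applied, and as $x\to 0$ the relevant scale shrinks to $|x|$ while the base point approaches the corner, where $\partial A$ is only Lipschitz. (For a small corner angle, the ball $B_{c|x|}(\mathcal{O}^{-k}x)$ can even meet both boundary branches, so the single-smooth-curve patch estimate does not apply as stated.) Your proposal names a scaling/tangent-cone argument as the fix but does not carry it out; as written, the key uniformity in $x$ is asserted rather than proved.

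This is precisely the step the paper treats differently: near the corner it decomposes $A$ into an exact polygonal corner plus an error region whose area inside $B_r(0)$ is $O(r^{2+\alpha})$ (equivalently $\phi(y_1)-y_1=O(y_1^{1+\alpha})$ for the boundary graph), computes the potential of the straight-edge sector explicitly (the Appendix lemma: the singular part is an explicit quadratic form times $\log|x_1^2+x_2^2|$, which cancels upon summing the $m$ rotated copies), and controls the error region by the argument of Proposition 1 of \cite{BertozziConstantin}. That route replaces your uniform-in-scale Calder\'on--Zygmund estimate at the collapsing orbit by an exact computation plus a perturbative estimate. To complete your version you would need either to reproduce that decomposition, or to make the blow-up argument honest: rescale by $|x|$, verify that the rescaled boundary is a $C^{1,\alpha}$ perturbation of the tangent cone with H\"older seminorm of size $O(|x|^{\alpha})$, and prove a patch-type estimate near $\mathcal{O}^{-k}x$ with constants depending only on the corner angle and these rescaled norms. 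Until one of these is done, the proof is incomplete at its central point.
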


\begin{proof}
We only give a basic sketch of the proof since the issue of domains with corners will be taken up in much greater detail in a later work.
Let $A$ be a set which has a corner at $(0,0)$ and for which $\partial A$ is $C^{1,\alpha}$ away from $(0,0)$.
It is clear (using \cite{BertozziConstantin}) that $D^2\psi$ is locally bounded away from $(0,0)$ so we only need to show that $D^2\psi$ is bounded in a neighborhood of $(0,0)$. 
The proof will proceed by "cutting out" the corners. Since the problem is rotational invariant, we can assume that the line $y=0$ bisects the corner. Call the angle at the corner $2\theta$.

Since the boundary of $A$ near the corner just consists of  $C^{1,\alpha}$ curves which intersect at the corner, $A$ can be written as a union of two pieces: one which is a polygonal corner in a small neighborhood of $(0,0)$ and one an error term. The area of the error region intersected with any small ball of radius $r$ centered at $(0,0)$ is on the order of $r^{2+\alpha}$. The part of $D^2 \psi$ coming from the error term can easily be shown to be bounded. This then reduces the problem to proving boundedness for the case of a symmetric polygonal corner. This is a computation which is in the appendix. 
More specifically, it suffices to consider the boundedness of the following integral independent of $x$:
$$\int_0^1 \int_0^{\phi(y_1)} \sum_{i=0}^{m-1}\frac{(x_1-(\mathcal{O}^iy)_1)(x_2-(\mathcal{O}^iy)_2)}{|x-\mathcal{O}^iy|^4} dy_2 dy_1$$ with $\phi$ some $C^{1,\alpha}$ function with $\phi(0)=0$, $\phi'(0)=c$ for some constant $c$ which relates to the angle $\theta$.
Hence we can break this integral into:
$$\int_0^1 \int_0^{y_1} \sum_{i=0}^{m-1}\frac{(x_1-(\mathcal{O}^iy)_1)(x_2-(\mathcal{O}^iy)_2)}{|x-\mathcal{O}^iy|^4} dy_2 dy_1+\int_0^1 \int_{y_1}^{\phi(y_1)} \sum_{i=0}^{m-1}\frac{(x_1-(\mathcal{O}^iy)_1)(x_2-(\mathcal{O}^iy)_2)}{|x-\mathcal{O}^iy|^4} dy_2 dy_1, $$
the first integral representing the polygonal corner and the second representing the error since $\phi(y_1)-y_1= O(y_1^{1+\alpha}).$
The boundedness of the first term is a computation which is similar to the one in Section 4 and we leave its details for the appendix. The error term (which is also done in \cite{EJ2}) can be controlled using the ideas in Proposition 1 of \cite{BertozziConstantin}, for example.

\end{proof}

\subsection{Example of Boundedness: The characteristic function of a 3-petal flower}

We mention one example where $W^{2,\infty}$ bounds on $\psi$ are possible using Lemma 5.1. Indeed, consider  $$\Delta \psi= \chi_{A}$$ where $A$ is a three-petal flower as given in the following figure.
\begin{figure}[h]
\includegraphics[width=3cm, height=2cm]{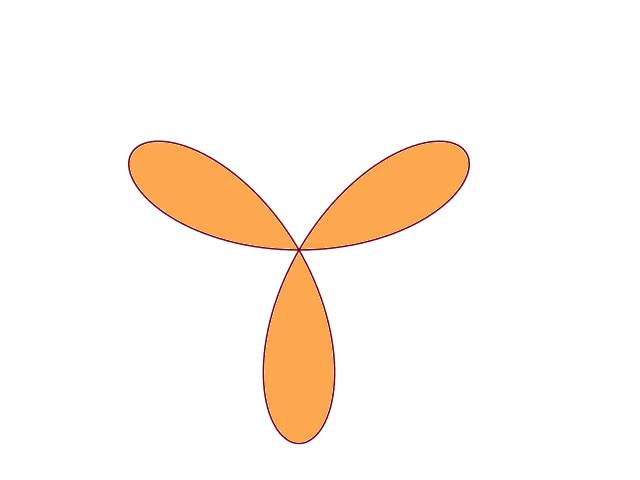}
\end{figure}

\noindent This is the area inside the graph of $$r= \sin(3\theta)$$
in polar coordinates. As a consequence of Lemma 5.1, $$|u|_{W^{2,\infty}}<\infty.$$
The following is an immediate corollary to the proof of Lemma 5.1:

\begin{cor}
Let $A\subset \mathbb{R}^2$ be a bounded Lipschitz domain with a $C^{1,\alpha}$ boundary away from finitely many points $a_1,...,a_N.$ Assume further that for each $a_i$, there exists a neighborhood $B_i$ of $a_i$ inside which $A$ is $m$-fold symmetric with $m\geq 3$. Let $u\in L^\infty$ solve:

$$\Delta \psi=\chi_{A}.$$

Then, $$|u|_{W^{2,\infty}}\leq C$$ for some constant $C$ depending on $A$.   
\end{cor}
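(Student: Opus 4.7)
The plan is to localize around each corner using a smooth, radially symmetric partition of unity and to apply Lemma 5.1 (or rather, its proof) separately to the piece near each corner; the region away from all corners is handled by the standard theory for $C^{1,\alpha}$ boundaries.

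First I will choose smooth cutoff functions $\eta_1, \ldots, \eta_N$ with the following properties: for each $i$, $\eta_i$ is a function of $|x - a_i|$ only, is identically $1$ on a smaller concentric ball around $a_i$, is supported in $B_i$, and the $\eta_i$ have pairwise disjoint supports (shrink the $B_i$ if necessary). Setting $\eta_0 := 1 - \sum_{i=1}^{N} \eta_i$, the cutoff $\eta_0$ vanishes in a neighborhood of every corner. Writing $\chi_A = \sum_{i=0}^{N} \eta_i \chi_A$ and using the Newtonian potential, I decompose $\psi = \sum_{i=0}^{N} \psi_i$ with $\Delta \psi_i = \eta_i \chi_A$; it then suffices to prove that $D^2 \psi_i \in L^\infty$ for each $i$.

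For the ``interior'' piece $\psi_0$: the jump set of $\eta_0 \chi_A$ lies on a portion of $\partial A$ which is globally $C^{1,\alpha}$, with smooth jump height $\eta_0$. The argument of Bertozzi--Constantin \cite{BertozziConstantin} (which is essentially local and adapts immediately to smooth multiples of characteristic functions of $C^{1,\alpha}$ domains) then gives $D^2 \psi_0 \in L^\infty$. For each corner piece $\psi_i$ with $i \geq 1$: the source $\eta_i \chi_A$ is compactly supported in $B_i$, is $m_i$-fold symmetric around $a_i$ (both $\eta_i$ and $\chi_A|_{B_i}$ are), and has jump discontinuities on $\partial A \cap \operatorname{supp}(\eta_i)$. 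This is precisely the geometric setup of Lemma 5.1---a single symmetric corner at $a_i$ with $C^{1,\alpha}$ arcs emanating from it---apart from the fact that the ``jump height'' is the smooth function $\eta_i$ rather than the constant $1$. The proof of Lemma 5.1 splits $A$ into a symmetric polygonal corner at $a_i$ plus a $C^{1,\alpha}$ error, and uses the symmetrization of the Green's function kernel around $a_i$ to cancel the leading singular terms; since this cancellation depends only on the $m_i$-fold rotational invariance of the source around $a_i$, which is preserved by the radially symmetric cutoff $\eta_i$, the argument applies verbatim to $\eta_i \chi_A$ and yields $D^2 \psi_i \in L^\infty$ in a neighborhood of $a_i$. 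Boundedness of $D^2 \psi_i$ away from $a_i$ follows, once again, from \cite{BertozziConstantin}.

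The principal point to verify carefully is the bookkeeping that justifies replacing $\chi_A$ by the smooth multiple $\eta_i \chi_A$ in the proof of Lemma 5.1; since the kernel cancellations exhibited there depend only on the rotational symmetry of the source and not on it being a sharp characteristic function, this replacement is benign. Summing the uniform bounds for $\psi_0, \psi_1, \ldots, \psi_N$ yields $|\psi|_{W^{2,\infty}} \leq C$, with the constant $C$ depending on $A$ through the geometry of $\partial A$, the symmetry orders at the corners, and the chosen cutoffs.
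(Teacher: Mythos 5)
Your proposal is correct and follows essentially the same route as the paper, which treats the corollary as immediate from the proof of Lemma 5.1 by locality: symmetrize near each corner, use Bertozzi--Constantin away from the corners; your radial partition of unity just makes that localization explicit. The only point you flag (the smooth jump height $\eta_i$) is even simpler than you suggest, since $\eta_i\equiv 1$ on a ball around $a_i$, so near the corner the source coincides with $\chi_A$ and the transition annulus contributes a harmlessly nonsingular term.
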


\begin{rema}
Note that $A$ could have different symmetries at different corners. For example, $A$ could have a 3-fold symmetry close to one corner and a 4-fold symmetry close to another corner. 
\end{rema}

Further results such as necessary and sufficient conditions for characteristic functions of sets, $A$, which are unions of sectors emanating from the origin will be given in the next section.

\section{Classification of Sets $A$ with $\psi_{A}\in W^{2,\infty}$}
Let $\psi_A$ denote the solution of $$\Delta \psi_{A} =\chi_{A}$$ which is unique up to a harmonic function.
A natural question one could ask is: 
\begin{center}\emph{Is it possible to classify the sets $A$ for which $\psi_A\in W^{2,\infty}$?}\end{center}
This section will be devoted to exploring this question. 
\begin{defn}
Let $0<\alpha<\beta<2\pi$ be such that $\alpha+\beta\leq 2\pi.$ Define the set $S^{\alpha}(\beta)$ as follows:
$$S^\alpha(\beta):=\{(r,\theta)\in\mathbb{R}^2: \,\,\, 0\leq r\leq 1,\,\,\,\ \beta-\alpha<\theta<\beta+\alpha\}.$$
\end{defn}
Pictorially, this is the sector of the unit circle on $\mathbb{R}^2$ which is centered at the line $\theta=\beta$ with angle $2\alpha$. 

Direct calculations show that if $A=S^{\alpha}(\beta)$ for given $\alpha$ and $\beta$, $u_{A}\not\in W^{2,\infty}(B_\frac{1}{2}(0)).$ However, the symmetry conditions of the previous sections tell us that if $A$ is a union of sets $S^{\alpha}(\beta),$ there might be a cancellation which allows us to assert that $\psi_A$ belongs to $W^{2,\infty}(B_\frac{1}{2}).$ However, the symmetry condition is slightly restrictive. It requires that there be multiple sectors with the same angle $\alpha$ centered at specific locations $\beta$. For example, if $$A= S^{\frac{\pi}{6}}(0)\cup S^{\frac{\pi}{6}}(\frac{2\pi}{3})\cup S^{\frac{\pi}{6}}(\frac{4\pi}{3}),$$ then we know that $u_{A}\in W^{2,\infty}(B_\frac{1}{2}(0)).$ We now wish to relax the symmetry assumption to allow for: $$A=\cup_{i=1}^m S^{\alpha_i}(\beta_i), $$ with $S^{\alpha_i}(\beta_i)$ non-intersecting. This is the content of the following theorem. 

\begin{thm}
Fix $m\geq 3$.  Assume $$A=\cup_{i=1}^m S^{\alpha_i}(\beta_i), $$ with $S^{\alpha_i}(\beta_i)\cap S^{\alpha_j}(\beta_j)=\phi,$ if $i\not=j.$ 
Then $\psi_A\in W^{2,\infty}(B_\frac{1}{2}(0))$ if and only if $$\sum_{i=1}^m \cot(\alpha_i)\Big[\frac{1}{\csc^2\alpha_i}-2\frac{(\cos\beta_i\cot\alpha_i+\sin(\beta_i))^2}{\csc^4\alpha_i} \Big]=0,$$
 $$\sum_{i=1}^m \cot(\alpha_i)\Big[\frac{1}{\csc^2\alpha_i}-2\frac{(-\sin\beta_i\cot\alpha_i+\cos(\beta_i))^2}{\csc^4\alpha_i} \Big]=0,$$
and
 $$\sum_{i=1}^m \cot(\alpha_i)\Big[\frac{2}{\csc^2\alpha_i}-2\frac{((\cos\beta_i-\sin\beta_i)\cot\alpha_i+\sin(\beta_i)+\cos(\beta_i))^2}{\csc^4\alpha_i} \Big]=0.$$

\end{thm}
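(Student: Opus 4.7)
The plan is to use the linearity of the Laplacian to reduce the problem to a sum of contributions from each sector, extract the logarithmic singularity of $D^2\psi_A$ at the origin, and read off the three cancellation conditions. Away from the origin in $B_{1/2}(0)$, the boundary of each $S^{\alpha_i}(\beta_i)$ consists of smooth radial segments, and by hypothesis the sectors are disjoint, so the arguments of \cite{BertozziConstantin} (or the estimates invoked at the end of the proof of Lemma~5.1) give $D^2\psi_A\in L^\infty(B_{1/2}(0)\setminus B_\delta(0))$ for every $\delta>0$. The whole question therefore reduces to the behavior as $x\to 0$.

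Next I would write $\psi_A=\sum_i\psi_{S^{\alpha_i}(\beta_i)}$ and use the Newtonian potential to represent (up to a uniformly bounded jump contribution $\tfrac{1}{2}\delta_{jk}\chi_A(x)$)
\[
\partial_{jk}\psi_A(x)=\frac{1}{2\pi}\sum_i\operatorname{P.V.}\int_{S^{\alpha_i}(\beta_i)}\frac{\delta_{jk}|x-y|^2-2(x-y)_j(x-y)_k}{|x-y|^4}\,dy.
\]
For $x=r\hat x$ with $r\to 0^+$ and $|\hat x|=1$, the rescaling $y=rz$ makes the integrand independent of $r$ and swells the domain of integration to the infinite sector $\{(\rho,\theta):\rho>0,\,|\theta-\beta_i|<\alpha_i\}$. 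Since the kernel decays like $|z|^{-2}$ at infinity, passing to polar coordinates shows that the rescaled integral is bounded in $\hat x$ if and only if the angular average $\sum_i\int_{\beta_i-\alpha_i}^{\beta_i+\alpha_i}k_{jk}(\theta)\,d\theta$ vanishes, where $k_{jk}(\theta)$ is the angular part of $\partial_{jk}G$; otherwise the corresponding second partial derivative blows up like $\log(1/r)$.

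The third step is a direct trigonometric computation. Using $\cos\beta\cot\alpha+\sin\beta=\cos(\beta-\alpha)/\sin\alpha$ and $-\sin\beta\cot\alpha+\cos\beta=\sin(\alpha-\beta)/\sin\alpha$, each bracketed expression in (A), (B), (C) collapses to a constant multiple of $\sin(2\alpha_i)\cos(2\beta_i-2\alpha_i)$ or $\sin(2\alpha_i)\sin(2\beta_i-2\alpha_i)$; after a half-angle identity these are precisely the three angular integrals above for $(j,k)=(1,1),(2,2),(1,2)$, respectively. This yields the necessity direction: if any of (A), (B), (C) fails, the corresponding entry of $D^2\psi_A(r\hat x)$ diverges logarithmically as $r\to 0$.

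For sufficiency, once all three cancellations hold the leading logarithm cancels after summing the sectors, and the next-order term in the rescaled integrand decays like $|z|^{-3}$, which is integrable on the infinite cone. The local singularity at $z\approx\hat x$ is handled by the Calderón-Zygmund-type cancellation of the kernel's angular mean over a small disk, exactly as in the proof of Lemma~3.1, yielding a bound uniform in $\hat x$. Combined with smoothness away from the origin, this gives $\psi_A\in W^{2,\infty}(B_{1/2}(0))$. The principal obstacle is the trigonometric bookkeeping: verifying that the rather opaque expressions (A), (B), (C) really are the three angular-integral cancellations, and reconciling the apparent redundancy — because $\Delta\psi_A=\chi_A$ is automatically bounded, only two of the three conditions are genuinely independent, and indeed (A) and (B) turn out to coincide (up to sign) after simplification, with (C) providing the second independent equation.
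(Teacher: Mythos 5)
Your overall strategy is legitimately different from the paper's: the paper proves the theorem by computing the Newtonian potential of a single sector exactly (Lemma 7.1 in the Appendix), reading off a singular part of the form $Q(x)\log|x|^2$ with $Q$ an explicit trace-free quadratic form, and then demanding that the summed quadratic form vanish identically (checked on the lines $x_1=0$, $x_2=0$, $x_1=x_2$, which is where the three displayed conditions come from); you instead extract the coefficient of the logarithm of $D^2\psi_A$ at the origin as an angular average of the Calder\'on--Zygmund kernel over the union of angular intervals. That route is sound in principle, and your observation that only two of the three conditions are independent (the trace condition being automatic because $\Delta\psi_A=\chi_A$ is bounded) is correct.

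The genuine gap is in your final identification step. Carried out honestly, your method gives: $\psi_A\in W^{2,\infty}(B_{1/2})$ if and only if $\sum_i\int_{\beta_i-\alpha_i}^{\beta_i+\alpha_i}\cos 2\theta\,d\theta=\sum_i\sin(2\alpha_i)\cos(2\beta_i)=0$ and $\sum_i\int_{\beta_i-\alpha_i}^{\beta_i+\alpha_i}\sin 2\theta\,d\theta=\sum_i\sin(2\alpha_i)\sin(2\beta_i)=0$, i.e.\ the phases entering are $2\beta_i$. On the other hand, your own (correct) simplification of the stated conditions gives, for (A)/(B), $\sum_i\sin(2\alpha_i)\cos(2\beta_i-2\alpha_i)=0$ and, for (C), $\sum_i\sin(2\alpha_i)\sin(2\beta_i-2\alpha_i)=0$: the phases are $2\beta_i-2\alpha_i$. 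These two families of conditions agree only when the phase shift $2\alpha_i$ is common to all terms, e.g.\ when all the $\alpha_i$ coincide; for sectors of different apertures they are genuinely different constraints, so the assertion that the bracketed expressions ``are precisely the three angular-integral cancellations'' does not hold, and your argument as written does not prove the stated equivalence. (What your computation actually exposes is a parametrization mismatch inside the paper: the Appendix integrates over the region $\{0<y_2<c\,y_1\}$, whose singular part corresponds to a sector with one edge on a coordinate ray rather than to the centered sector $S^{\alpha}(0)$ of Definition 6.1, and the stated conditions inherit that shift.) To close the gap you must either recompute the single-sector singular part for the centered sector $S^{\alpha_i}(\beta_i)$ and match it against your angular integrals --- in which case your conditions $\sum_i\sin(2\alpha_i)e^{2i\beta_i}=0$ are the ones you can prove --- or else prove the theorem for the shifted parametrization implicit in the Appendix; you cannot simply declare the two condition sets equal. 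A secondary, more minor point: in the sufficiency direction the uniformity in $\hat x$ of the near-singularity bound (in particular when $\hat x$ approaches an edge of a sector) should be justified by the usual half-disk cancellation / Bertozzi--Constantin-type argument rather than asserted.
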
 

\begin{proof}
The proof reduces to exactly computing $u_{A}$ a ball around $0$ when $A=S^{\alpha}(0)$ for fixed $A$. Using Lemma \ref{Calculation} in the Appendix, we see that if $A=S^\alpha(0), $ $$u_{A}= \cot(\alpha) \Big[\frac{x_1^2+x_2^2}{\csc^2\alpha}-2\frac{(x_2\cot\alpha+x_1)^2}{\csc^4\alpha}\Big]\log|x_1^2+x_2^2|+G$$ with $G\in W^{2,\infty}.$ Let's first see what happens if $A=S^\alpha(0)\cup S^\alpha(\frac{\pi}{2}),$ for some $\alpha$ which is not too big. By linearity, $u_{A}=u_{S^\alpha(0)}+u_{S^\alpha(\frac{\pi}{2})},$ since $S^\alpha(\frac{\pi}{2})$ is just a $90$ degree rotation of $S^\alpha(0).$

In this case we get:$$(x_2\cot\alpha +x_1)^2+(x_1\cot\alpha-x_2)^2=(x_2^2+x_1^2)\csc^2\alpha$$ so that the singularity is completely suppressed. In particular, when $$A=\cup_{i=1}^m S^{\alpha_{i}}(\beta_i)$$ we get a sum of the following form:
$$\sum_{i=1}^m\cot(\alpha_i)\Big[\frac{x_1^2+x_2^2}{\csc^2\alpha_i}-2\frac{\Big((\mathcal{O}_{\beta_i}x)_1\cot\alpha_i+(\mathcal{O}_{\beta_i}x)_2\Big)^2}{\csc^4\alpha_i}\Big].$$
Hence we see that $\psi_A\in W^{2,\infty}$ if and only if this sum vanishes identically. Notice that a quadratic polynomial of two variables is identically 0 if and only if it vanishes on $x_1=0$, $x_2=0,$ and $x_1=x_2.$
Hence we need $$\sum_{i=1}^m \cot(\alpha_i)\Big[\frac{1}{\csc^2\alpha_i}-2\frac{(\cos\beta_i\cot\alpha_i+\sin(\beta_i))^2}{\csc^4\alpha_i} \Big]=0,$$
 $$\sum_{i=1}^m \cot(\alpha_i)\Big[\frac{1}{\csc^2\alpha_i}-2\frac{(-\sin\beta_i\cot\alpha_i+\cos(\beta_i))^2}{\csc^4\alpha_i} \Big]=0,$$
and
 $$\sum_{i=1}^m \cot(\alpha_i)\Big[\frac{2}{\csc^2\alpha_i}-2\frac{((\cos\beta_i-\sin\beta_i)\cot\alpha_i+\sin(\beta_i)+\cos(\beta_i))^2}{\csc^4\alpha_i} \Big]=0.$$

\end{proof}

\subsection{An example}

To illustrate the previous result, we give examples of sets $A$ and $B$ for which $u_{A}\in W^{2,\infty}$ and $u_{B}\not\in W^{2,\infty}$ in a small neighborhood of the origin, though they look very similar!
This can be done by taking $$A=S^{\alpha}(0)\cup S^{\alpha}(\frac{\pi}{2})$$ and $$B=S^{\alpha}(0)\cup S^{\alpha}(\pi)$$ which are pictured below. It is clear that $A$ satisfies the conditions of Theorem 6.2 while $B$ does not. 
\begin{figure}[h]
{\includegraphics[width=5cm, height=4cm]{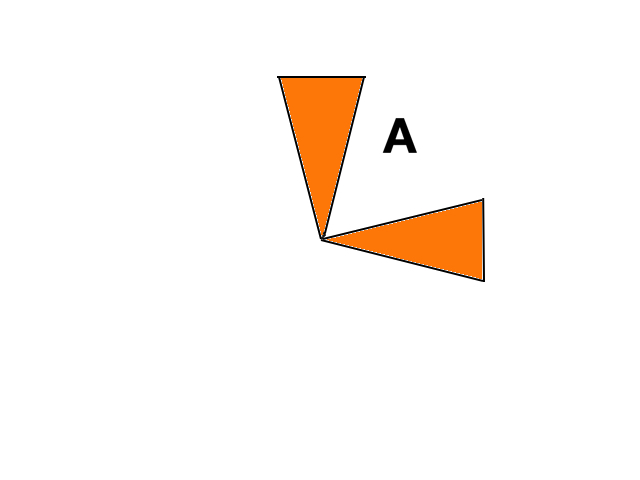}\,\,\,\,\,\,\,\,\,\,\,\,\,\,\,\,\,\,\,\,\,\,\,\,\,\,\,\,\,\,\,\,\,\,\,\,\,\,\,\,\,\,\,\,\,\,\,\,\,\,\,\,\,\,\,\,\,\,\,\,\,\,\,\,\,\,\,\,\,\,\,\,\,\,\,\,\,\includegraphics[width=5cm, height=5cm]{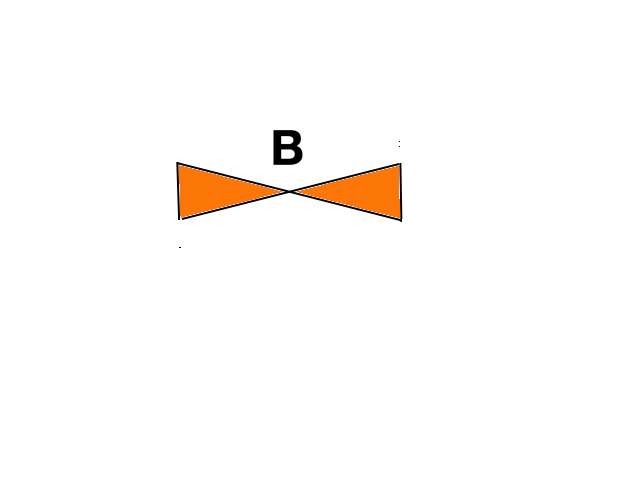}}
\caption{$\psi_{A}\in W^{2,\infty}(B_{\frac{1}{2}}(0))\quad\quad\quad\quad\quad\quad\quad\quad\quad\quad\quad\quad\quad\,\,\,\,\,\,\, \psi_{B}\not\in W^{2,\infty}(B_{\frac{1}{2}}(0))\quad\quad\quad\quad\quad\quad$}
\end{figure}

\section{Acknowledgements}
The author would like to thank Nader Masmoudi, Fang-Hua Lin, and In-Jee Jeong for helpful discussions and the NSF for its generous funding.
\section{Appendix}

In this section we give the computation of the non-smooth part of the solution of $$\Delta \psi=\chi_{A}$$ when $A=S^{\alpha}(0)$ for any $\alpha>0$. 

\begin{lemm}\label{Calculation}
$$\int_0^1 \int_0^{cy_1} \log |x-y|dy_1dy_2=\frac{c}{2}\Big[\frac{x_1^2+x_2^2}{1+c^2} -2\Big(\frac{cx_2+x_1}{1+c^2}\Big)^2\Big]\log|x_1^2+x_2^2| +G$$ with $G\in W^{2,\infty}(B_\frac{1}{2}(0)).$

\end{lemm}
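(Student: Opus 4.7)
The plan is to extract the non-$W^{2,\infty}$ part of
\[
I(x) := \int_0^1\!\int_0^{cy_1}\log|x-y|\,dy_2\,dy_1
\]
from the local geometry at the single ``bad'' vertex $(0,0)$ of the triangular integration region $T := \{(y_1,y_2):0\le y_1\le 1,\ 0\le y_2\le cy_1\}$. Since $\Delta I = 2\pi\chi_T$ and the remaining vertices $(1,0)$ and $(1,c)$ lie at distance at least $1$ from the origin, their corner contributions to $I$ are real-analytic on $B_{1/2}(0)$ and can be absorbed into $G$. On a small neighborhood of $0$ the region $T$ coincides with the infinite cone $C := \{\theta\in(0,\alpha)\}$, where $\alpha := \arctan c$, so the analysis reduces to understanding the logarithmic potential of $\chi_C$.

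The key computation is separation of variables in polar coordinates. I look for $V(r,\theta)=\sum_{n\in\mathbb{Z}}v_n(r)e^{in\theta}$ with $\Delta V = 2\pi\chi_C$; the radial ODE $v_n''+v_n'/r - n^2 v_n/r^2 = 2\pi\hat{\chi}_{C,n}$ admits the polynomial particular solution $v_n\propto r^2$ for $|n|\ne 2$. Combined with the admissible homogeneous solution $r^{|n|}$, such modes produce functions of the form $r^2 g_n(\theta)$ with $g_n$ smooth on $S^1$; a direct computation in Cartesian coordinates shows each of these lies in $W^{2,\infty}_{\mathrm{loc}}$. The resonance at $|n|=2$ forces the ansatz $v_n = A_n r^2\log r$ with $A_n = \pi\hat{\chi}_{C,n}/2$ determined by matching, and this is the unique source of non-$W^{2,\infty}$ behavior. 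Combining the $n=\pm 2$ modes yields a real singular term of the form $K\,r^2\log r\cdot\cos(2\theta-\alpha)$ for an explicit constant $K$; converting via $\sin\alpha = c/\sqrt{1+c^2}$, $\cos\alpha = 1/\sqrt{1+c^2}$, and $r^2\cos(2\theta-\alpha) = \cos\alpha\,(x_1^2-x_2^2)+2\sin\alpha\,x_1 x_2$ rearranges it into the quadratic polynomial times $\log|x_1^2+x_2^2|$ displayed in the lemma.

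It remains to verify that the remainder $G := I - (\text{singular ansatz})$ lies in $W^{2,\infty}(B_{1/2}(0))$. This combines three contributions: the non-resonant Fourier modes and the homogeneous harmonic corrections (already shown to be $W^{2,\infty}_{\mathrm{loc}}$), and the smooth error from replacing $C$ by $T$, which is an integral of $\log|x-y|$ over the symmetric difference $T\,\triangle\,C$. Because $T\,\triangle\,C$ is bounded away from the origin, this last integral is real-analytic on $B_{1/2}(0)$. The main obstacle is the algebraic bookkeeping in the second step: the separation-of-variables argument itself is standard, but keeping track of the trigonometric identities needed to identify the resonant-mode coefficient with the precise Cartesian polynomial in the lemma requires some care.
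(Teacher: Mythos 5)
Your structural reductions are fine, and your route is genuinely different from the paper's (the paper evaluates the iterated integral by explicit antiderivatives in $y_2$ and then $y_1$; you localize to the cone and read off the corner singularity from the resonant second angular harmonic). Two details need patching but are fixable: the logarithmic potential of the \emph{infinite} cone is a divergent integral, so you should either truncate to $C\cap B_1$ (then $T\triangle(C\cap B_1)$ is bounded and bounded away from the origin) or simply note that any two local solutions of $\Delta V=2\pi\chi_C$ in $B_{1/2}$ differ by a harmonic, hence smooth, function; and the claim that the non-resonant modes contribute a $W^{2,\infty}$ remainder does not follow from summing mode-by-mode bounds, since $\sum_{|n|\neq 2}n^{2}|\hat\chi_{C,n}|/|4-n^{2}|$ diverges --- instead subtract the $\pm2$ harmonics from the right-hand side and solve the resulting angular ODE for $g$ in the ansatz $r^{2}g(\theta)$.

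The genuine gap is the final identification, which you assert but do not verify, and which in fact fails. Carrying out your own scheme (with $A_{\pm2}=\pi\hat\chi_{C,\pm2}/2$ and $\hat\chi_{C,2}=\frac{1-e^{-2i\alpha}}{4\pi i}$, $\alpha=\arctan c$) the resonant part is
\[
\frac{\sin\alpha}{2}\,r^{2}\log r\,\cos(2\theta-\alpha)
=\frac{c}{4(1+c^{2})}\bigl[x_1^{2}-x_2^{2}+2c\,x_1x_2\bigr]\log(x_1^{2}+x_2^{2}),
\]
which is \emph{not} the polynomial displayed in the lemma: at $c=1$ your coefficient is $\tfrac18(x_1^{2}-x_2^{2}+2x_1x_2)$ while the statement gives $-\tfrac12 x_1x_2$. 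Since $q(x)\log(x_1^{2}+x_2^{2})\in W^{2,\infty}(B_{1/2}(0))$ forces the quadratic form $q$ to vanish (its second derivatives contain $(\partial_{ij}q)\log(x_1^{2}+x_2^{2})$ with constant coefficient), the two candidate singular parts cannot differ by an admissible $G$, so no bookkeeping will make your computation land on the stated right-hand side. Moreover, your answer, not the printed one, is correct: for $c=1$ an exact evaluation gives $\partial_{x_1}\int_0^1\int_0^{y_1}\log|x-y|\,dy_2\,dy_1=-\tfrac{\pi}{4}+\tfrac{s}{2}\log s+O(s)$ at $x=(s,0)$, which matches your $\cos(2\theta-\alpha)$ term and is incompatible with the stated formula, which is Lipschitz in that direction on that axis. (The discrepancy traces to the paper's own computation, which drops an overall factor $\tfrac12$ and asserts a cancellation between $\int_0^1x_2\log[(x_1-y_1)^2+(x_2-cy_1)^2]\,dy_1$ and $\int_0^1x_2\log[(x_1-y_1)^2+x_2^2]\,dy_1$ that does not hold.) So as a proof of the lemma \emph{as stated}, your proposal has an irreparable gap at the last step; what it actually proves is a corrected version of the formula, with corresponding adjustments needed in the coefficients of Theorem 6.2.
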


\begin{proof}

A useful fact for this computation is that the anti-derivative of $\log(x^2+A^2)$ with respect to $x$ is: $x\log(x^2+A^2)$ plus a Lipschitz function with Lipschitz constant which is independent of $A$ when $0\leq A\leq 1.$ 

$$\int_0^1 \int_0^{cy_1} \log |x-y|dy_1dy_2=\frac{1}{2}\int_0^1\int_0^{cy_1} \log | (x_1-y_1)^2+(x_2-y_2)^2|dy_1dy_2$$
$$=\frac{1}{2} \int_0^1 (y_2-x_2)\log|(x_1-y_1)^2+(x_2-y_2)^2| \Big|_0^{cy_1}dy_1 + G_1,$$ where $G_1$ is a "good" term belonging to $W^{2,\infty}$. 
Hence, up to a $W^{2,\infty}$ function and multiplication by a constant, the integral in question equals:
$$\int_0^1 (cy_1-x_2) \log | (x_1-y_1)^2+ (x_2-cy_1)^2| dy_1+\int_0^1 x_2 \log|(x_1-y_1)^2+x_2^2|dy_1.$$
Now, notice that $$(x_1-y_1)^2+ (x_2-cy_1)^2=(1+c^2)y_1^2-2y_1(cx_2+x_1)+x_1^2+x_2^2$$
$$=(\sqrt{1+c^2} y_1-\frac{cx_2+x_1}{\sqrt{1+c^2}})^2+x_1^2+x_2^2-\frac{(cx_2+x_1)^2}{1+c^2}$$
$$=(1+c^2)\Big[(y_1-\frac{cx_2+x_1}{1+c^2})^2+ \frac{(x_1^2+x_2^2)(1+c^2)-(cx_2+x_1)^2}{(1+c^2)^2}\Big]$$
In particular, $$x_2\int_0^1 \log |(x_1-y_1)^2+(x_2-cy_1)^2|dy_1$$ $$=x_2\int_0^1 \log\Big[(y_1-\frac{cx_2+x_1}{1+c^2})^2+ \frac{(x_1^2+x_2^2)(1+c^2)-(cx_2+x_1)^2}{(1+c^2)^2}\Big]dy_1+G_2$$ with $G_2\in W^{2,\infty}$. 
Then, $$x_2\int_0^1 \log\Big[(\frac{cx_2+x_1}{1+c^2})^2+ \frac{(x_1^2+x_2^2)(1+c^2)-(cx_2+x_1)^2}{(1+c^2)^2}\Big]dy_1$$
$$=x_2 \log\Big|(\frac{cx_2+x_1}{1+c^2})^2+ \frac{(x_1^2+x_2^2)(1+c^2)-(cx_2+x_1)^2}{(1+c^2)^2}\Big|=x_2\log|x_1^2+x_2^2|+G_3.  $$
So we see that $$\int_0^1 (cy_1-x_2) \log | (x_1-y_1)^2+ (x_2-cy_1)^2| dy_1+\int_0^1 x_2 \log|(x_1-y_1)^2+x_2^2|dy_1$$
$$=c \int_0^1 y_1 \log|(x_1-y_1)^2+(x_2-cy_1)^2|dy_1$$ $$=c\int_0^1y_1\log\Big[(y_1-\frac{cx_2+x_1}{1+c^2})^2+ \frac{(x_1^2+x_2^2)(1+c^2)-(cx_2+x_1)^2}{(1+c^2)^2}\Big]dy_1.$$

Now we note that the antiderivative of $x\log |(x-b)^2+A|$ is $\frac{1}{2}(A^2-b^2+x^2 )\log|(x-b)^2+A|$ plus a Lipschitz function with Lipschitz constant independent of $A$. Hence, computing, the integral we get:
$$\frac{c}{2}\Big[\frac{x_1^2+x_2^2}{1+c^2} -2\Big(\frac{cx_2+x_1}{1+c^2}\Big)^2\Big]\log|x_1^2+x_2^2|.$$ plus $W^{2,\infty}$ parts. 

Now noting that $c=\cot\alpha$ gives us the result. 
\end{proof}

\end{document}